\newtheorem{theorem}{Theorem}
\newtheorem{corollary}[theorem]{Corollary}
\newtheorem{definition}[theorem]{Definition}
\newtheorem{lemma}[theorem]{Lemma}
\newtheorem{remark}[theorem]{Remark}
\newenvironment{proof}[1][Proof]{\noindent\textbf{#1.} }{\ \rule{0.5em}{0.5em}}
\begin{document}

\title{\textbf{Growth of Solutions of Linear Difference Equations with
Meromorphic Coefficients in Terms of Iterated }$p-\phi $ \textbf{Order}}
\author{Anirban Bandyopadhyay$^{1}$, Chinmay Ghosh$^{2}$, Sanjib Kumar Datta$%
^{3}$ \\
$^{1}$Gopalpur Primary School\\
Murshidabad, 742304\\
West Bengal, India \\
anirbanbanerjee010@gmail.com \\
$^{2}$Department of Mathematics\\
Kazi Nazrul University\\
Nazrul Road, P.O.- Kalla C.H.\\
Asansol-713340, West Bengal, India\\
chinmayarp@gmail.com\\
$^{2}$Department of Mathematics\\
University of Kalyani\\
P.O.-Kalyani, Dist-Nadia, PIN-741235,\\
West Bengal, India \\
sanjibdatta05@gmail.com\\
}
\date{}
\maketitle

\begin{abstract}
In this article we have studied complex linear homogeneous difference
equations where the coefficients are meromorphic functions, having finite
iterated $p-\phi $ order. We have made some estimations on the growth of its
nontrivial solutions. Also we have extended some of the previous results of
Zhou-Zheng (2017), Bela\"{\i}di-Benkarouba (2019) and Bela\"{\i}di-Bellaama
(2021).

\textbf{AMS Subject Classification }(2010) : 30D35, 39A06, 39B32.\newline
\textbf{Keywords and phrases}: linear difference equation, meromorphic
solution, iterated $p-\phi $ order, iterated $p-\phi $ type, iterated $%
p-\phi $ exponent of convergence.
\end{abstract}

\section{Introduction}

Consider the complex difference equations 
\begin{multline}
A_{k}(z)f(z+c_{k})+A_{k-1}(z)f(z+c_{k-1})+\cdots  \label{1h} \\
+A_{1}(z)f(z+c_{1})+A_{0}(z)f(z)=0
\end{multline}%
and%
\begin{multline}
A_{k}(z)f(z+c_{k})+A_{k-1}(z)f(z+c_{k-1})+\cdots  \label{1nh} \\
+A_{1}(z)f(z+c_{1})+A_{0}(z)f(z)=A(z)
\end{multline}%
where the coefficients $A_{0},A_{1},\ldots ,A_{k}$ $(A_{k}\neq 0)$ and\ $A$ $%
(A\neq 0)$ are meromorphic and $c_{k},c_{k-1},\ldots ,c_{1}\in 
\mathbb{C}
,$ the set of finite complex numbers, are all distinct. Recently, study on (%
\ref{1h}) and (\ref{1nh}) have become a topic of great interests from the
perspective of Nevanlinna's theory\cite{hay,gol}. In 2006 Halburd-Korhonen 
\cite{hal} and in 2008 Chiang-Feng \cite{chi} independently established some
results which are difference analogues of the classical logarithmic
derivative lemma. After that, using those results, many authors (\cite{lai}, 
\cite{zhe}, \cite{liu}, \cite{lat} etc.) studied the growth properties of
the solutions of linear difference equations. In $2017$and in $2019$
respectively Zhou, Zheng \cite{zho} and Bela\"{\i}di, Benkarouba \cite{bel}
investigated higher order linear difference equations and proved some
results estimating the growth of the solutions of those equations using
iterated order \cite{kin} and iterated type \cite{cao}. Again in $2021$, Bela%
\"{\i}di and Bellaama \cite{bel2} used iterated lower order \cite{hu} and
iterated lower type \cite{hu} to improve some of the previous theorems.

In $2009,$ Chyzhykov et. al. \cite{chy} introduced $\phi -$order for a
meromorphic function in the unit disc. Also in $2014,$ Shen et. al. \cite%
{she} developed the concept of $\phi -$order of a meromorphic function in
whole complex plane and applied it to the second order linear differential
equation. The $\phi -$order is based on the function $\phi $ which is
defined as: $\phi :[0,\infty )\longrightarrow (0,\infty )$ is a
nondecreasing, unbounded function which satisfies following two conditions:

\begin{tabular}{ll}
$i)$ & There exists $\,k_{\phi }\in 
\mathbb{N}
,$ such that $\lim\limits_{r\rightarrow \infty }\frac{\log _{p+1}r}{\log
\phi (r)}{\small =0}$ for all $p\geq k_{\phi }{\small ;}$ \\ 
$ii)$ & $\lim\limits_{r\rightarrow \infty }\frac{\log \phi (\alpha r)}{\log
\phi (r)}=1,$ for $\alpha >1.$%
\end{tabular}

In this paper, we first introduce the iterated $p-\phi $ order, the iterated 
$p-\phi $ type and the iterated $p-\phi $ exponent of convergence of a
meromorphic function. Since the concept of iterated $p-\phi $ order is more
generalisation than that of iterated order, we are motivated to improvise
some of the results of Zhou-Zheng \cite{zho}, Bela\"{\i}di-Benkarouba \cite%
{bel} and Bela\"{\i}di-Bellaama \cite{bel2}. Here we obtain our main results
for (\ref{1h}) or (\ref{1nh}), when there may or may not exist exactly one
coefficient having maximal iterated $p-\phi $ order (or iterated lower $%
p-\phi $ order).

Let us define for all $r>0,$%
\begin{equation*}
\log ^{+}r=\max \left( 0,~\log r\right) .
\end{equation*}%
We denote for all $r\in \mathbb{R},$ 
\begin{equation*}
\exp _{1}r=e^{r},~\exp _{n+1}r=\exp (\exp _{n}r),~n\in \mathbb{N}.
\end{equation*}%
Also we denote for all sufficiently large values of $r,$ 
\begin{equation*}
\log _{1}r=\log ^{+}r,~\log _{n+1}r=\log ^{+}(\log _{n}r),~n\in \mathbb{N}.
\end{equation*}%
Further we denote%
\begin{equation*}
\exp _{0}r=\log _{0}r=r,~\exp _{-1}r=\log ^{+}r,~\log _{-1}r=e^{r}.
\end{equation*}%
Moreover, in this paper, the linear measure of a set $E\subset (0,+\infty )$
is defined by 
\begin{equation*}
m\left( E\right) =\int_{0}^{+\infty }\chi _{E}dt
\end{equation*}%
and the logarithmic measure of a set $F\subset (1,+\infty )$ is defined by 
\begin{equation*}
lm\left( F\right) =\int_{1}^{+\infty }\frac{\chi _{F}}{t}dt,
\end{equation*}%
where $\chi _{G}(t)$ is the characteristics function on the set $G$.

\section{Basic definitions and lemmas}

First we give the following definitions to indicate the rate of growth of
meromorphic functions.

\begin{definition}[Iterated $p-\protect\phi $ order]
Let $p\in \mathbb{N}$. Then the iterated $p-\phi $ order, $\rho _{p}(f,\phi
) $ of a meromorphic function $f(z)$ is defined by 
\begin{equation*}
\rho _{p}(f,\phi )=\limsup_{r\rightarrow \infty }\frac{\log _{p}T(r,f)}{\log
\phi (r)},
\end{equation*}%
and when $f$ is an entire function, we define%
\begin{equation*}
\rho _{p}(f,\phi )=\limsup_{r\rightarrow \infty }\frac{\log _{p+1}M_{f}(r)}{%
\log \phi (r)}.
\end{equation*}
\end{definition}

\begin{definition}[Finiteness degree of the $\protect\phi $ order]
The finiteness degree of the $\phi $ order of a meromorphic function $f(z)$
is defined by

$i_{\rho }(f,\phi )=\left\{ 
\begin{tabular}{ll}
$0,$ & $\text{if }\rho _{1}(f,\phi )=\infty \text{,}$ \\ 
&  \\ 
$\min \{j\in 
\mathbb{N}
:\rho _{j}(f,\phi )<\infty \},$ & $\text{if there exist some }j\in 
\mathbb{N}
\text{ with }$ \\ 
& $\rho _{j}(f,\phi )<\infty \text{ exists,}$ \\ 
&  \\ 
$\infty ,$ & $\text{if }\rho _{j}(f,\phi )=\infty \text{ for all }j\in 
\mathbb{N}
.$%
\end{tabular}%
\right. $
\end{definition}

\begin{definition}[Iterated lower $p-\protect\phi $ order]
Let $p\in \mathbb{N}$. Then the iterated lower $p-\phi $ order, $\mu
_{p}(f,\phi )$ of a meromorphic function $f(z)$ is defined by 
\begin{equation*}
\mu _{p}(f,\phi )=\liminf_{r\rightarrow \infty }\frac{\log _{p}T(r,f)}{\log
\phi (r)},
\end{equation*}%
and when $f$ is an entire function, we define%
\begin{equation*}
\mu _{p}(f,\phi )=\liminf_{r\rightarrow \infty }\frac{\log _{p+1}M_{f}(r)}{%
\log \phi (r)}.
\end{equation*}
\end{definition}

\begin{remark}
Let $f$ be a meromorphic function with $i_{\rho }(f,\phi )=p$ and the
iterated $p-\phi $ order of $f$ be $\rho _{p}(f,\phi ).$ Then it\ follows
from the definition that, for any $\epsilon >0,$%
\begin{equation*}
T(r,f)=O\left( \exp _{p-1}\{\phi (r)\}^{\rho _{p}(f,\phi )+\epsilon \text{ }%
}\right) .
\end{equation*}
\end{remark}

\begin{remark}
Let $f$ be a meromorphic function such that, $T(r,f)\leq \exp _{p}\{\phi
(r)\}^{\alpha }$ for some $\alpha >0$ and $p\in 
\mathbb{N}
,$ then we can say%
\begin{equation*}
i_{\rho }(f,\phi )\leq p+1\text{ and }\rho _{p+1}(f,\phi )\leq \alpha .
\end{equation*}
\end{remark}

\begin{definition}[Iterated $p-\protect\phi $ type]
Let $p\in 
\mathbb{N}
.$ Then the iterated $p-\phi $ type, $\tau _{p}(f,\phi )$ of a meromorphic
function $f$ with nonzero finite iterated $p-\phi $ order $\rho _{p}(f,\phi
) $ is defined by%
\begin{equation*}
\tau _{p}(f,\phi )=\limsup_{r\rightarrow \infty }\frac{\log _{p-1}T(r,f)}{%
\left\{ \phi (r)\right\} ^{\rho _{p}(f,\phi )}},
\end{equation*}%
and when $f$ is an entire function, then for $p\geq 2$ we define%
\begin{equation*}
\tau _{p}(f,\phi )=\limsup_{r\rightarrow \infty }\frac{\log _{p}M_{f}(r)}{%
\left\{ \phi (r)\right\} ^{\rho _{p}(f,\phi )}}.
\end{equation*}
\end{definition}

\begin{definition}[Iterated lower $p-\protect\phi $ type]
Let $p\in 
\mathbb{N}
.$ Then the iterated lower $p-\phi $ type, $t_{p}(f,\phi )$ of a meromorphic
function $f$ having non-zero finite iterated lower $p-\phi $ order $\mu
_{p}(f,\phi )$ is defined by%
\begin{equation*}
t_{p}(f,\phi )=\liminf_{r\rightarrow \infty }\frac{\log _{p-1}T(r,f)}{%
\left\{ \phi (r)\right\} ^{\mu _{p}(f,\phi )}},
\end{equation*}%
and when $f$ is an entire function, then for $p\geq 2$ we define%
\begin{equation*}
t_{p}(f,\phi )=\liminf_{r\rightarrow \infty }\frac{\log _{p}M_{f}(r)}{%
\left\{ \phi (r)\right\} ^{\mu _{p}(f,\phi )}}.
\end{equation*}
\end{definition}

\begin{remark}
Let $f$ be a meromorphic function with $i_{\rho }(f,\phi )=p$. Also let the
iterated $p-\phi $ order and the iterated $p-\phi $ type of $f$ be $\rho
_{p}(f,\phi )$ and $\tau _{p}(f,\phi )$ respectively. Then it\ follows from
the definition that, for any $\epsilon >0,$%
\begin{equation*}
T(r,f)=O\left( \exp _{p-1}\left[ \left\{ \tau _{p}(f,\phi )+\epsilon
\right\} \{\phi (r)\}^{\rho _{p}(f,\phi )\text{ }}\right] \right) .
\end{equation*}
\end{remark}

\begin{definition}[Iterated $p-\protect\phi $ exponent of convergence]
Let $p\in 
\mathbb{N}
.$ Then the iterated $p-\phi $ exponent of convergence of the sequence of
poles of a meromorphic function $f$ is defined by%
\begin{equation*}
\lambda _{p}\left( \frac{1}{f},\phi \right) =\limsup_{r\rightarrow \infty }%
\frac{\log _{p}N(r,f)}{\log _{q}\left\{ \phi (r)\right\} }.
\end{equation*}
\end{definition}

Now we give some lemmas which will use to prove our main results.

\begin{lemma}
\label{L1}\cite{hal}Suppose $z_{1}\in \mathbb{C},$ $\delta <1~$and $%
\varepsilon >0.$ Then for any nonconstant meromorphic function $f$, there
exists a set $F\subset (1,+\infty )$ with $lm(F)<\infty $ such that 
\begin{equation*}
m\left( r,\frac{f\left( z+z_{1}\right) }{f\left( z\right) }\right) =o\left( 
\frac{\left\{ T\left( r+\left\vert z_{1}\right\vert ,\text{ }f\right)
\right\} ^{1+\varepsilon }}{r^{\delta }}\right) ,
\end{equation*}%
for all $r\notin F.$
\end{lemma}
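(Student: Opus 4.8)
This is the Halburd--Korhonen difference analogue of the lemma on the logarithmic derivative, so the plan is to reproduce their Poisson--Jensen argument. Fix a large $r>1$ and an auxiliary radius $R$ with $r+|z_{1}|<R$, to be pinned down at the end. For every $w$ with $|w|<R$ the Poisson--Jensen formula gives
\[
\log |f(w)|=\frac{1}{2\pi }\int_{0}^{2\pi }\operatorname{Re}\frac{Re^{i\theta }+w}{Re^{i\theta }-w}\,\log |f(Re^{i\theta })|\,d\theta +\sum_{|b_{\nu }|<R}\log \left\vert \frac{R^{2}-\overline{b_{\nu }}w}{R(w-b_{\nu })}\right\vert -\sum_{|a_{\mu }|<R}\log \left\vert \frac{R^{2}-\overline{a_{\mu }}w}{R(w-a_{\mu })}\right\vert ,
\]
where the $a_{\mu }$ and the $b_{\nu }$ run over the zeros and the poles of $f$ in $|w|<R$. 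Applying this at $w=z+z_{1}$ and at $w=z$ with $|z|=r$ and subtracting writes $\log |f(z+z_{1})/f(z)|$ as a boundary integral plus a sum over zeros plus a sum over poles. Since $m\!\left( r,\frac{f(z+z_{1})}{f(z)}\right) $ is the mean over $|z|=r$ of $\log ^{+}|f(z+z_{1})/f(z)|$, and the positive part of a sum is at most the sum of the absolute values, it suffices to bound the $L^{1}(d\theta )$-norm over $|z|=r$ of each of the three groups.

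For the boundary term I would use that, along the segment from $z$ to $z+z_{1}$, the gradient of the Poisson kernel has modulus $O\!\left( R/(R-r-|z_{1}|)^{2}\right) $, so the kernel difference is $O\!\left( R|z_{1}|/(R-r-|z_{1}|)^{2}\right) $ uniformly in $\theta $; integrating it against $\bigl|\log |f(Re^{i\theta })|\bigr|$ and applying the first fundamental theorem bounds this group by $O\!\left( \frac{R|z_{1}|}{(R-r-|z_{1}|)^{2}}\,T(R,f)\right) $. For the zero/pole sums, the summand attached to a point $a$ (zero or pole) splits as $\log \left\vert 1-\dfrac{\overline{a}z_{1}}{R^{2}-\overline{a}z}\right\vert +\log \left\vert 1+\dfrac{z_{1}}{z-a}\right\vert $: the first term is $O(|z_{1}|/(R-r))$ per point because $|R^{2}-\overline{a}z|\ge R(R-r)$, and the second I would control with $\log ^{+}\bigl|1+\tfrac{z_{1}}{z-a}\bigr|\le \log \bigl(1+\tfrac{|z_{1}|}{|z-a|}\bigr)$ together with the uniform estimate $\frac{1}{2\pi }\int_{0}^{2\pi }\log ^{+}\frac{1}{|re^{i\theta }-a|}\,d\theta =O(1)$, splitting the range of $\theta $ according to whether $|re^{i\theta }-a|$ exceeds $|z_{1}|$. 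Summing over the $n(R,f)+n(R,1/f)$ zeros and poles, this group is $O\!\left( n(R,f)+n(R,1/f)\right) $ up to a factor depending on $z_{1}$ and on $1/(R-r)$; and $n(R,f)+n(R,1/f)\le \frac{N(R^{\prime },f)+N(R^{\prime },1/f)}{\log (R^{\prime }/R)}\le \frac{O(T(R^{\prime },f))}{\log (R^{\prime }/R)}$ for any $R^{\prime }>R$. Altogether the crude bound is $m\!\left( r,\frac{f(z+z_{1})}{f(z)}\right) \ll \left( \frac{R|z_{1}|}{(R-r-|z_{1}|)^{2}}+\frac{1}{\log (R^{\prime }/R)}\right) T(R^{\prime },f)$, the bounded ``$z_{1}$-constant'' being absorbed once $r$ is kept away from the moduli of the zeros and poles.

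It remains to choose the radii and to upgrade this to an $o(\cdot )$ estimate off an exceptional set. I would take $r+|z_{1}|<R<R^{\prime }$ with increments of order $r\,T(r+|z_{1}|,f)^{-\xi }$ for a small $\xi >0$, so that $\frac{R|z_{1}|}{(R-r-|z_{1}|)^{2}}$ becomes of order $T(r+|z_{1}|,f)^{2\xi }/r$ and $\frac{1}{\log (R^{\prime }/R)}$ of order $T(r+|z_{1}|,f)^{\xi }$; to replace $T(R^{\prime },f)$ by $T(r+|z_{1}|,f)^{1+\xi }$ I would invoke a Borel--type growth lemma for the nondecreasing function $T(r,f)$, which holds for $r$ outside a set of finite logarithmic measure. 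One must also throw into the exceptional set those $r$ that are too close to the modulus of some zero or pole, so that the near-singular integrals in the previous paragraph are genuinely under control; the point is that this set, too, has finite logarithmic measure. With these choices the bound collapses to a quantity of order $T(r+|z_{1}|,f)^{1+3\xi }/r$, and since $\delta <1$ gives $1/r=o(1/r^{\delta })$ while $T(r+|z_{1}|,f)\to \infty $, choosing $3\xi <\varepsilon $ gives exactly $m\!\left( r,\frac{f(z+z_{1})}{f(z)}\right) =o\!\left( \frac{\{T(r+|z_{1}|,f)\}^{1+\varepsilon }}{r^{\delta }}\right) $ for all $r$ outside the union of the exceptional sets, which is still of finite logarithmic measure. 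The main obstacle is the zero/pole sums: keeping the near-singular contributions small enough that the final bound is an honest $o(\cdot )$ even for slowly growing $f$ is what forces the exceptional set, and verifying that the combined exceptional set has finite logarithmic measure is the step that needs genuine care.
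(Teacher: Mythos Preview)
The paper does not prove this lemma at all: it is stated with the citation \cite{hal} and used as a black box, so there is no ``paper's own proof'' to compare against. Your proposal is a faithful outline of the original Halburd--Korhonen argument via Poisson--Jensen, the difference-of-kernels estimate for the boundary term, separate treatment of the Blaschke-type sums over zeros and poles, and a Borel growth lemma to pass from $T(R',f)$ back to $T(r+|z_{1}|,f)^{1+\xi}$ off an exceptional set of finite logarithmic measure; this is exactly the route taken in \cite{hal}, so in spirit you are reproducing the cited source rather than offering an alternative.

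One point where your sketch is looser than the actual proof: the claim that $\frac{1}{2\pi}\int_{0}^{2\pi}\log^{+}\frac{1}{|re^{i\theta}-a|}\,d\theta=O(1)$ uniformly in $a$ is not quite right (for $|a|$ near $r$ the integral is of order $\log r$, not bounded), and simply summing an $O(1)$ bound over $n(R,f)+n(R,1/f)$ points would lose too much. Halburd and Korhonen instead bound the full sum $\sum_{a}\frac{1}{2\pi}\int_{0}^{2\pi}\log\bigl(1+\frac{|z_{1}|}{|re^{i\theta}-a|}\bigr)\,d\theta$ by interchanging sum and integral and invoking a Cartan-type lemma (the set where $\prod_{a}|z-a|$ is small has small measure), which is what produces the second exceptional set you allude to. Your final paragraph correctly flags this as the delicate step, but the mechanism you describe for it is not quite the one that works; if you want a self-contained write-up you should look at how \cite{hal} (or the parallel argument in \cite{chi}) handles this sum.
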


\begin{lemma}
\label{L2}\cite{gol} Let $z_{1},$ $z_{2}$ be nonzero complex constants. Then
for any nonconstant meromorphic function $f$, we have%
\begin{equation*}
\left( 1+o(1)\right) T\left( r-\left\vert z_{1}\right\vert ,\text{ }f\right)
\leq T\left( r,\text{ }f\left( z+z_{1}\right) \right) \leq \left(
1+o(1)\right) T\left( r+\left\vert z_{1}\right\vert ,\text{ }f\right) ,
\end{equation*}%
for sufficiently large values of $r.$

Consequently we have, $\rho _{p}\left( f\left( z+z_{1}\right) ,\phi \right)
=\rho _{p}\left( f,\phi \right) $ and $\mu _{p}\left( f\left( z+z_{1}\right)
,\phi \right) =\mu _{p}\left( f,\phi \right) .$
\end{lemma}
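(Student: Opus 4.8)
The final statement to prove is Lemma \ref{L2} (the inequality is quoted from \cite{gol}), together with its consequence about the iterated $p-\phi$ order and lower order being preserved under the shift $z \mapsto z + z_1$. I will focus on the consequence, since the Nevanlinna inequality itself is cited.

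\textbf{Plan of proof.}

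The plan is to deduce the equality of iterated $p-\phi$ orders directly from the two-sided comparison
$$(1+o(1))\,T(r-|z_1|,f) \le T(r,f(z+z_1)) \le (1+o(1))\,T(r+|z_1|,f)$$
by taking $\log_p$ of all three terms, dividing by $\log\phi(r)$, and passing to the limit superior (respectively inferior). First I would observe that the factor $1+o(1)$ is harmless: once inside $\log_p$ for $p \ge 1$, a bounded multiplicative factor contributes an additive $O(1)$ at the first logarithm and is absorbed after dividing by $\log\phi(r) \to \infty$ (recall $\phi$ is nondecreasing and unbounded). So it suffices to compare $\log_p T(r \pm |z_1|, f)$ with $\log_p T(r, f)$ after normalising by $\log\phi(r)$.

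Next I would handle the shift $r \mapsto r \pm |z_1|$ in the argument of $T$. The key point is that $T(\cdot, f)$ is nondecreasing, so $T(r-|z_1|, f) \le T(r,f) \le T(r+|z_1|,f)$ for $r$ large; hence $\rho_p(f(z+z_1),\phi)$ is squeezed between $\limsup_{r\to\infty} \frac{\log_p T(r-|z_1|,f)}{\log\phi(r)}$ and $\limsup_{r\to\infty} \frac{\log_p T(r+|z_1|,f)}{\log\phi(r)}$. To finish I need that both of these equal $\rho_p(f,\phi)$. For the upper one, substitute $s = r + |z_1|$: the ratio becomes $\frac{\log_p T(s,f)}{\log\phi(s-|z_1|)}$, and condition $(ii)$ on $\phi$ — namely $\lim_{r\to\infty}\frac{\log\phi(\alpha r)}{\log\phi(r)} = 1$ for $\alpha > 1$ — forces $\log\phi(s-|z_1|) \sim \log\phi(s)$ (write $s - |z_1| = \alpha^{-1} s$ with $\alpha = s/(s-|z_1|) \to 1$, or more carefully sandwich between $\phi(s/2)$ and $\phi(s)$ and use that the ratio of logs tends to $1$), so the limsup is exactly $\rho_p(f,\phi)$. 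The lower shift is symmetric. The same argument verbatim with $\liminf$ in place of $\limsup$ gives the statement for $\mu_p$.

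\textbf{Main obstacle.} The only genuinely delicate point is justifying $\log\phi(r \pm |z_1|) \sim \log\phi(r)$ as $r \to \infty$ from hypothesis $(ii)$, which is stated for dilations $\alpha r$ with a fixed $\alpha > 1$ rather than for additive shifts. The fix is routine: for $r$ large, $r - |z_1| \ge r/2$ and $r + |z_1| \le 2r$, so monotonicity of $\phi$ gives $\phi(r/2) \le \phi(r \pm |z_1|) \le \phi(2r)$ (adjusting which bound applies); taking logs and dividing by $\log\phi(r)$, both outer quantities tend to $1$ by $(ii)$ applied with $\alpha = 2$, hence so does the middle. Everything else is bookkeeping with monotonicity of $T$ and $\log_p$ and the absorption of $o(1)$ and $O(1)$ terms against the unbounded denominator $\log\phi(r)$.
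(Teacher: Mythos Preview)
The paper gives no proof of this lemma: the two-sided inequality is simply cited from \cite{gol}, and the consequence about $\rho_p$ and $\mu_p$ is asserted without argument. Your proposal correctly supplies the missing details, and in particular your sandwich $\phi(r/2)\le \phi(r\pm|z_1|)\le\phi(2r)$ together with condition~$(ii)$ on $\phi$ is exactly the right way to pass from the dilation hypothesis to the additive shift; the absorption of the $1+o(1)$ factor under $\log_p(\cdot)/\log\phi(r)$ is likewise sound.
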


\begin{lemma}
\label{L3}\cite{hal}Suppose $z_{1},z_{2}\in \mathbb{C~}(z_{1}\neq z_{2}),$ $%
\delta <1,$ $\varepsilon >0.$ Then, for any nonconstant meromorphic function 
$f$, there exists a set $F\subset (1,+\infty )$ with $lm(F)<\infty $ such
that 
\begin{equation*}
m\left( r,\frac{f\left( z+z_{1}\right) }{f\left( z+z_{2}\right) }\right)
=o\left( \frac{\left\{ T\left( r+\left\vert z_{1}-z_{2}\right\vert
+\left\vert z_{2}\right\vert ,\text{ }f\right) \right\} ^{1+\varepsilon }}{%
r^{\delta }}\right) ,
\end{equation*}%
for all $r\notin F.$
\end{lemma}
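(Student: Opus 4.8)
The plan is to reduce the quotient $f(z+z_1)/f(z+z_2)$ to the shift-quotient form already handled in Lemma \ref{L1}. Writing $w = z + z_2$, we have $z + z_1 = w + (z_1 - z_2)$, so
\begin{equation*}
\frac{f(z+z_1)}{f(z+z_2)} = \frac{g(w + (z_1 - z_2))}{g(w)},
\end{equation*}
where $g(w) := f(w)$ is just $f$ itself and $z_1 - z_2 \neq 0$ is a fixed nonzero complex constant. First I would apply Lemma \ref{L1} with this nonzero constant $z_1 - z_2$ in place of $z_1$: there exists a set $F \subset (1,+\infty)$ with $lm(F) < \infty$ such that, for all $w$-radii $r \notin F$,
\begin{equation*}
m\!\left(r, \frac{g(w + (z_1 - z_2))}{g(w)}\right) = o\!\left(\frac{\{T(r + |z_1 - z_2|,\, g)\}^{1+\varepsilon}}{r^{\delta}}\right).
\end{equation*}

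The one subtlety is bookkeeping of the translation between the $w$-plane and the $z$-plane. The proximity function $m(r, h(z+a))$ is measured on circles $|z| = r$, whereas after the substitution the natural variable is $w$; since $w = z + z_2$ traces the circle $|w - z_2| = r$ and $|w| \le r + |z_2|$ on that circle, I would absorb this shift using the standard monotonicity and triangle-inequality estimate $m(r, f(z+a)) \le m(r + |a|, f) + O(1)$ together with the monotonicity of $T(r,f)$ in $r$. This replaces the argument $r + |z_1 - z_2|$ by $r + |z_1 - z_2| + |z_2|$ inside $T$, which is exactly the radius appearing in the statement, and leaves the $o(\cdot)$ and the exceptional set $F$ intact (shifting $F$ by a bounded amount does not change finiteness of its logarithmic measure).

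The main obstacle — really the only place requiring care — is making sure the exceptional set and the $o(1)$ behave correctly under the change of variables, i.e. that the "$o$" is uniform enough that translating the radius by the bounded constant $|z_2|$ does not destroy it; this follows from monotonicity of $T$ and the fact that for a function of finite order $T(r + |z_2|, f) = (1+o(1)) T(r, f)$ outside a set of finite logarithmic measure (and even without that, one simply keeps the larger argument $r + |z_1 - z_2| + |z_2|$ in the bound, as the statement does). Everything else is a direct quotation of Lemma \ref{L1}. $\rule{0.5em}{0.5em}$
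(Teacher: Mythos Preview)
The paper does not prove Lemma~\ref{L3}; it simply cites it from Halburd--Korhonen \cite{hal}, so there is no in-paper argument to compare against.

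Your reduction to Lemma~\ref{L1} is the right idea, but the execution introduces an unnecessary difficulty and a genuine gap. By setting $g(w):=f(w)$ and then treating $w=z+z_2$ as a change of variables, you are left comparing a proximity integral over the circle $|z|=r$ (equivalently $|w-z_2|=r$) with the one over $|w|=r$ that Lemma~\ref{L1} actually controls. The bridge you invoke, ``$m(r, f(z+a)) \le m(r+|a|, f)+O(1)$'', is not a standard fact: unlike $N$ and $T$, the proximity function $m(r,\cdot)$ is not monotone in $r$, and an average over a shifted circle does not obviously compare with an average over a larger centred one. As written, that step is unjustified.

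The clean fix is to avoid the change of variables entirely. Define $h(z):=f(z+z_2)$ as a \emph{new} nonconstant meromorphic function of $z$ (not a relabelling of $f$). Then
\[
\frac{f(z+z_1)}{f(z+z_2)}=\frac{h(z+(z_1-z_2))}{h(z)}
\]
is literally in the form required by Lemma~\ref{L1}, applied to $h$ with shift $z_1-z_2\neq 0$, on the circle $|z|=r$; no translation of circles occurs. This yields
\[
m\!\left(r,\frac{f(z+z_1)}{f(z+z_2)}\right)=o\!\left(\frac{\{T(r+|z_1-z_2|,\,h)\}^{1+\varepsilon}}{r^{\delta}}\right),\qquad r\notin F,
\]
and now Lemma~\ref{L2} (which \emph{is} a statement about $T$) gives $T(r+|z_1-z_2|,\,h)=T(r+|z_1-z_2|,\,f(\cdot+z_2))\le (1+o(1))\,T(r+|z_1-z_2|+|z_2|,\,f)$. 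That is precisely the conclusion of Lemma~\ref{L3}, with no circle-shift bookkeeping needed.
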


\begin{lemma}
\label{L4a}Let $f$ be a nonconstant meromorphic function with nonzero finite
iterated $p-\phi $ order, $\rho _{p}\left( f,\phi \right) .$ Then for any
given $\varepsilon >0,$ there exists a set $F\subset (1,+\infty )$ with $%
lm(F)=+\infty $ such that 
\begin{equation*}
\log _{p-1}T\left( r,\text{ }f\right) >\left\{ \phi (r)\right\} ^{\rho
_{p}\left( f,\phi \right) -\varepsilon },
\end{equation*}%
for $r\in F.$
\end{lemma}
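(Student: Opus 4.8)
The plan is to argue directly from the definition of the iterated $p-\phi$ order as a $\limsup$. By definition,
\[
\rho _{p}(f,\phi )=\limsup_{r\rightarrow \infty }\frac{\log _{p}T(r,f)}{\log \phi (r)},
\]
so there exists a sequence $r_{n}\rightarrow \infty $ (which we may take strictly increasing with $r_{n+1}>2r_{n}$, say) along which
\[
\frac{\log _{p}T(r_{n},f)}{\log \phi (r_{n})}\longrightarrow \rho _{p}(f,\phi ).
\]
Fix $\varepsilon >0$. For all $n$ large enough we then have $\log _{p}T(r_{n},f)>\big(\rho _{p}(f,\phi )-\tfrac{\varepsilon }{2}\big)\log \phi (r_{n})$, i.e.
\[
\log _{p-1}T(r_{n},f)>\big\{\phi (r_{n})\big\}^{\rho _{p}(f,\phi )-\varepsilon /2}.
\]
So far this only gives the inequality on a discrete sequence, whose logarithmic measure is zero, not infinite; the work is to thicken each $r_{n}$ to an interval.

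The second step is to promote this pointwise-on-a-sequence bound to a bound on a set of infinite logarithmic measure, using the regularity hypothesis on $\phi$ (condition $(ii)$: $\log \phi (\alpha r)/\log \phi (r)\rightarrow 1$ for each $\alpha >1$) together with the monotonicity of $T(r,f)$ in $r$. The idea is: for each large $n$, consider the interval $I_{n}=[r_{n},\,\alpha _{n}r_{n}]$ for a suitable $\alpha _{n}\downarrow 1$ chosen slowly. For $r\in I_{n}$, monotonicity of $T$ gives $T(r,f)\geq T(r_{n},f)$, hence
\[
\log _{p-1}T(r,f)\geq \log _{p-1}T(r_{n},f)>\big\{\phi (r_{n})\big\}^{\rho _{p}(f,\phi )-\varepsilon /2}.
\]
On the other hand, $\phi$ is nondecreasing so $\phi (r_{n})\leq \phi (r)$, but that is the wrong direction; instead I use condition $(ii)$ to get $\phi (r_{n})\geq \phi (r)^{1-o(1)}\geq \phi (\alpha _{n}r_{n})^{1-o(1)}$ for $r\in I_{n}$, so that $\{\phi (r_{n})\}^{\rho _{p}-\varepsilon /2}\geq \{\phi (r)\}^{\rho _{p}-\varepsilon }$ once $n$ is large (here we use $\rho _{p}(f,\phi )>0$ so the exponent is positive and the $o(1)$ loss is absorbed into the extra $\varepsilon /2$). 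Setting $F=\bigcup_{n\geq n_{0}}I_{n}$ then yields $\log _{p-1}T(r,f)>\{\phi (r)\}^{\rho _{p}(f,\phi )-\varepsilon }$ for all $r\in F$.

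Finally I must check that $lm(F)=+\infty $. Since $r_{n+1}>2r_{n}$ the intervals $I_{n}$ are pairwise disjoint for large $n$, and
\[
lm(I_{n})=\int_{r_{n}}^{\alpha _{n}r_{n}}\frac{dt}{t}=\log \alpha _{n}.
\]
The only remaining freedom is the choice of $\alpha _{n}\downarrow 1$: I need $\alpha _{n}\to 1$ slowly enough that $\log \phi (\alpha _{n}r_{n})/\log \phi (r_{n})\to 1$ still holds along the sequence (this is where the main obstacle lies, since condition $(ii)$ is only stated for fixed $\alpha$, so one needs a standard diagonalization/uniformity argument to allow $\alpha $ to vary with $n$), yet slowly enough that $\sum_{n}\log \alpha _{n}=+\infty $. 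For instance $\alpha _{n}=1+\tfrac{1}{n}$ gives $\sum \log \alpha _{n}=+\infty $ while $\alpha _{n}\to 1$; one then verifies, using monotonicity of $\phi$ and squeezing $\phi (\alpha _{n}r_{n})$ between $\phi (r_{n})$ and $\phi (2r_{n})$ together with $(ii)$ applied at $\alpha =2$, that the ratio tends to $1$. This establishes $lm(F)=+\infty $ and completes the proof. The main obstacle, as noted, is the passage from the fixed-$\alpha $ hypothesis $(ii)$ to a version uniform enough to let $\alpha _{n}$ shrink toward $1$ while keeping the intervals long in logarithmic measure; everything else is monotonicity and bookkeeping with the $\varepsilon $'s.
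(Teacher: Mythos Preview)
Your proposal is correct and follows essentially the same approach as the paper: choose a sequence $r_{n}$ realizing the $\limsup$, thicken each $r_{n}$ to the interval $[r_{n},(1+\tfrac{1}{n})r_{n}]$, use monotonicity of $T(r,f)$ together with condition~(ii) on $\phi$ to pass from $\{\phi(r_{n})\}^{\rho_{p}-\varepsilon/2}$ to $\{\phi(r)\}^{\rho_{p}-\varepsilon}$, and note $\sum_{n}\log(1+\tfrac{1}{n})=+\infty$. In fact you are more careful than the paper on the point you flag as the ``main obstacle'': the paper simply writes $\{\phi(\tfrac{n}{n+1}r)\}^{\rho_{p}-\varepsilon/2}>\{\phi(r)\}^{\rho_{p}-\varepsilon}$ without comment, whereas your squeezing argument (bounding $\phi(\alpha_{n}r_{n})$ between $\phi(r_{n})$ and $\phi(2r_{n})$ and applying~(ii) at the fixed value $\alpha=2$) is exactly what justifies that step.
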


\begin{proof}
By the definition of iterated $p-\phi $ order, there exists an increasing
sequence $\{r_{n}\}~(r_{n}\rightarrow \infty )$ satisfying $(1+\frac{1}{n}%
)r_{n}<r_{n+1}~\ $and%
\begin{equation*}
\lim_{r_{n}\rightarrow \infty }\frac{\log _{p}T(r_{n},f)}{\log \phi (r_{n})}%
=\rho _{p}(f,\phi ).
\end{equation*}

Then for given $\varepsilon >0$\ there exists an integer $N_{0}$ such that
for all $n>N_{0},$%
\begin{equation*}
\log _{p-1}T\left( r_{n},\text{ }f\right) >\left\{ \phi (r_{n})\right\}
^{\rho _{p}\left( f,\phi \right) -\frac{\varepsilon }{2}}
\end{equation*}

Then for $r\in \left[ r_{n},(1+\frac{1}{n})r_{n}\right] $ and for $n>N_{0}$%
\begin{eqnarray*}
\log _{p-1}T\left( r,\text{ }f\right) &>&\log _{p-1}T\left( r_{n},\text{ }%
f\right) >\left\{ \phi (r_{n})\right\} ^{\rho _{p}\left( f,\phi \right) -%
\frac{\varepsilon }{2}} \\
&>&\left\{ \phi (\frac{n}{n+1}r)\right\} ^{\rho _{p}\left( f,\phi \right) -%
\frac{\varepsilon }{2}}>\left\{ \phi (r)\right\} ^{\rho _{p}\left( f,\phi
\right) -\varepsilon }.
\end{eqnarray*}

Set $F=\cup _{n=N_{0}}^{\infty }\left[ r_{n},(1+\frac{1}{n})r_{n}\right] ,$
then%
\begin{eqnarray*}
lm(F) &=&\sum_{n=N_{0}}^{\infty }lm\left( \left[ r_{n},(1+\frac{1}{n})r_{n}%
\right] \right) =\sum_{n=N_{0}}^{\infty }\int_{1}^{+\infty }\frac{\chi _{%
\left[ r_{n},(1+\frac{1}{n})r_{n}\right] }}{t}dt \\
&=&\sum_{n=N_{0}}^{\infty }\int_{r_{n}}^{(1+\frac{1}{n})r_{n}}\frac{1}{t}%
dt=\sum_{n=N_{0}}^{\infty }\log (1+\frac{1}{n})=+\infty .
\end{eqnarray*}

Thus Lemma \ref{L4a} is proved.
\end{proof}

\begin{lemma}
\label{L4b}Let $f$ be a nonconstant meromorphic function with nonzero finite
iterated $p-\phi $ order, $\rho _{p}\left( f,\phi \right) $ and nonzero
finite iterated $p-\phi $ type, $\tau _{p}\left( f,\phi \right) .$ Then for
any given $\varepsilon >0,$ there exists a set $F\subset (1,+\infty )$ with $%
lm(F)=+\infty $ such that 
\begin{equation*}
\log _{p-1}T\left( r,\text{ }f\right) >\left( \tau _{p}\left( f,\phi \right)
-\varepsilon \right) \left\{ \phi (r)\right\} ^{\rho _{p}\left( f,\phi
\right) }.
\end{equation*}%
for $r\in F.$
\end{lemma}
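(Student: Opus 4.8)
The plan is to imitate the proof of Lemma \ref{L4a}, but now extract a sequence along which the \emph{type} — not just the order — is nearly attained. First I would invoke the definition of the iterated $p-\phi$ type: since
\[
\tau_{p}(f,\phi)=\limsup_{r\rightarrow\infty}\frac{\log_{p-1}T(r,f)}{\{\phi(r)\}^{\rho_{p}(f,\phi)}},
\]
there exists an increasing sequence $\{r_{n}\}$ with $r_{n}\rightarrow\infty$, which I may take to satisfy the gap condition $(1+\tfrac1n)r_{n}<r_{n+1}$, such that
\[
\lim_{n\rightarrow\infty}\frac{\log_{p-1}T(r_{n},f)}{\{\phi(r_{n})\}^{\rho_{p}(f,\phi)}}=\tau_{p}(f,\phi).
\]
Hence for any $\varepsilon>0$ there is $N_{0}$ so that $\log_{p-1}T(r_{n},f)>\bigl(\tau_{p}(f,\phi)-\tfrac{\varepsilon}{2}\bigr)\{\phi(r_{n})\}^{\rho_{p}(f,\phi)}$ for all $n>N_{0}$.

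Next I would propagate this lower bound from the discrete points $r_{n}$ to the whole interval $[r_{n},(1+\tfrac1n)r_{n}]$. For $r$ in this interval, monotonicity of $T(r,f)$ in $r$ and monotonicity of $\phi$ give
\[
\log_{p-1}T(r,f)\ge\log_{p-1}T(r_{n},f)>\Bigl(\tau_{p}(f,\phi)-\tfrac{\varepsilon}{2}\Bigr)\{\phi(r_{n})\}^{\rho_{p}(f,\phi)}\ge\Bigl(\tau_{p}(f,\phi)-\tfrac{\varepsilon}{2}\Bigr)\Bigl\{\phi\bigl(\tfrac{n}{n+1}r\bigr)\Bigr\}^{\rho_{p}(f,\phi)}.
\]
Now I would use condition $(ii)$ on $\phi$ (applied with $\alpha=\tfrac{n+1}{n}>1$, or rather its consequence that $\phi(\beta r)/\phi(r)\to1$ as $r\to\infty$ for fixed $\beta<1$): for $n$ large enough and $r$ large, $\{\phi(\tfrac{n}{n+1}r)\}^{\rho_{p}(f,\phi)}$ exceeds $\bigl(1-\text{small}\bigr)\{\phi(r)\}^{\rho_{p}(f,\phi)}$, so that the product $(\tau_{p}-\tfrac{\varepsilon}{2})(1-\text{small})$ can be absorbed into $\tau_{p}(f,\phi)-\varepsilon$. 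Finally, setting $F=\bigcup_{n=N_{0}}^{\infty}[r_{n},(1+\tfrac1n)r_{n}]$, the same computation as in Lemma \ref{L4a} gives $lm(F)=\sum_{n\ge N_{0}}\log(1+\tfrac1n)=+\infty$, completing the proof.

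The main obstacle — really the only delicate point — is the handling of the factor $\{\phi(\tfrac{n}{n+1}r)\}^{\rho_{p}(f,\phi)}$ versus $\{\phi(r)\}^{\rho_{p}(f,\phi)}$: unlike in Lemma \ref{L4a}, where the slack in the exponent ($\rho_{p}-\tfrac{\varepsilon}{2}$ versus $\rho_{p}-\varepsilon$) automatically swallows any bounded multiplicative error, here the exponent $\rho_{p}(f,\phi)$ is fixed and one must instead control a multiplicative constant in front of $\{\phi(r)\}^{\rho_{p}(f,\phi)}$. The care needed is to choose $N_{0}$ large enough (depending on $\varepsilon$ and $\tau_{p}(f,\phi)$) that for every $n>N_{0}$ the ratio $\phi(\tfrac{n}{n+1}r)/\phi(r)$ is close enough to $1$ — uniformly for $r\ge r_{n}$ — which is exactly what property $(ii)$ of $\phi$ guarantees since $\tfrac{n+1}{n}\downarrow 1$; once this uniformity is pinned down, the rest is the routine logarithmic-measure bookkeeping already carried out in Lemma \ref{L4a}.
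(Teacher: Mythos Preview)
Your proposal is correct and follows essentially the same route as the paper's own proof: extract a sequence $\{r_n\}$ realising the $\limsup$ in the definition of $\tau_p(f,\phi)$ with the gap condition $(1+\tfrac1n)r_n<r_{n+1}$, propagate the lower bound to each interval $[r_n,(1+\tfrac1n)r_n]$ via monotonicity of $T$ and $\phi$, absorb the defect using property (ii) of $\phi$, and compute $lm(F)=\sum\log(1+\tfrac1n)=+\infty$. Your explicit discussion of why the passage from $\{\phi(\tfrac{n}{n+1}r)\}^{\rho_p}$ to $\{\phi(r)\}^{\rho_p}$ requires more care here than in Lemma~\ref{L4a} is in fact more detailed than the paper, which simply writes the chain of inequalities without comment.
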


\begin{proof}
By the definition of iterated $p-\phi $ order and iterated $p-\phi $ type,
there exists an increasing sequence $\{r_{n}\}~(r_{n}\rightarrow \infty )$
satisfying $(1+\frac{1}{n})r_{n}<r_{n+1}~\ $and%
\begin{equation*}
\limsup_{r\rightarrow \infty }\frac{\log _{p-1}T(r_{n},f)}{\left\{ \phi
(r_{n})\right\} ^{\rho _{p}(f,\phi )}}=\tau _{p}(f,\phi ).
\end{equation*}

Then for given $\varepsilon >0$\ there exists an integer $N_{0}$ such that
for all $n>N_{0},$%
\begin{equation*}
\log _{p-1}T\left( r_{n},\text{ }f\right) >\left( \tau _{p}\left( f,\phi
\right) -\frac{\varepsilon }{2}\right) \left\{ \phi (r_{n})\right\} ^{\rho
_{p}\left( f,\phi \right) }
\end{equation*}

Then for $r\in \left[ r_{n},(1+\frac{1}{n})r_{n}\right] $ and for $n>N_{0}$%
\begin{gather*}
\log _{p-1}T\left( r,\text{ }f\right) >\log _{p-1}T\left( r_{n},\text{ }%
f\right) >\left( \tau _{p}\left( f,\phi \right) -\frac{\varepsilon }{2}%
\right) \left\{ \phi (r_{n})\right\} ^{\rho _{p}\left( f,\phi \right) } \\
>\left( \tau _{p}\left( f,\phi \right) -\frac{\varepsilon }{2}\right)
\left\{ \phi (\frac{n}{n+1}r)\right\} ^{\rho _{p}\left( f,\phi \right)
}>\left( \tau _{p}\left( f,\phi \right) -\varepsilon \right) \left\{ \phi
(r)\right\} ^{\rho _{p}\left( f,\phi \right) }.
\end{gather*}

Set $F=\cup _{n=N_{0}}^{\infty }\left[ r_{n},(1+\frac{1}{n})r_{n}\right] ,$
then%
\begin{eqnarray*}
lm(F) &=&\sum_{n=N_{0}}^{\infty }lm\left( \left[ r_{n},(1+\frac{1}{n})r_{n}%
\right] \right) =\sum_{n=N_{0}}^{\infty }\int_{1}^{+\infty }\frac{\chi _{%
\left[ r_{n},(1+\frac{1}{n})r_{n}\right] }}{t}dt \\
&=&\sum_{n=N_{0}}^{\infty }\int_{r_{n}}^{(1+\frac{1}{n})r_{n}}\frac{1}{t}%
dt=\sum_{n=N_{0}}^{\infty }\log (1+\frac{1}{n})=+\infty .
\end{eqnarray*}

Thus Lemma \ref{L4b} is proved.
\end{proof}

\begin{lemma}
\label{L4c}Let $f$ be a nonconstant meromorphic function with nonzero finite
iterated lower $p-\phi $ order, $\mu _{p}\left( f,\phi \right) .$ Then for
any given $\varepsilon >0,$ there exists a set $F\subset (1,+\infty )$ with $%
lm(F)=+\infty $ such that 
\begin{equation*}
\log _{p-1}T\left( r,\text{ }f\right) <\left\{ \phi (r)\right\} ^{\mu
_{p}\left( f,\phi \right) +\varepsilon },
\end{equation*}%
for $r\in F.$
\end{lemma}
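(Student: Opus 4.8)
The plan is to follow the scheme of the proof of Lemma~\ref{L4a}, with $\limsup$ replaced by $\liminf$ and with one genuine modification: the defining sequence must be fattened into intervals lying to the \emph{left} of each $r_n$ instead of to the right, since now we are after an \emph{upper} bound for the nondecreasing quantity $r\mapsto\log_{p-1}T(r,f)$. By the definition of $\mu_p(f,\phi)$ I would first choose an increasing sequence $\{r_n\}$ with $r_n\to\infty$, $(1+\frac{1}{n})r_n<r_{n+1}$, and $\frac{\log_p T(r_n,f)}{\log\phi(r_n)}\to\mu_p(f,\phi)$; then, fixing $\varepsilon>0$, there is an integer $N_0$ with $\log_{p-1}T(r_n,f)<\{\phi(r_n)\}^{\mu_p(f,\phi)+\varepsilon/2}$ for every $n>N_0$.

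Next I would put $I_n=[\frac{n}{n+1}r_n,\;r_n]$ and verify the asserted inequality on $I_n$ for $n>N_0$. If $r\in I_n$, then, since $r\mapsto\log_{p-1}T(r,f)$ is nondecreasing, $\log_{p-1}T(r,f)\le\log_{p-1}T(r_n,f)<\{\phi(r_n)\}^{\mu_p(f,\phi)+\varepsilon/2}$; moreover $r\ge\frac{n}{n+1}r_n$ gives $r_n\le\frac{n+1}{n}r\le 2r$, so monotonicity of $\phi$ yields $\phi(r_n)\le\phi(2r)$ and hence $\{\phi(r_n)\}^{\mu_p(f,\phi)+\varepsilon/2}\le\{\phi(2r)\}^{\mu_p(f,\phi)+\varepsilon/2}$. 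It then remains to absorb the dilation factor $2$: by property $(ii)$ of $\phi$ applied with $\alpha=2$ one has $\frac{\log\phi(2r)}{\log\phi(r)}\to 1$, while $\frac{\mu_p(f,\phi)+\varepsilon}{\mu_p(f,\phi)+\varepsilon/2}>1$ because $\mu_p(f,\phi)>0$; therefore for all sufficiently large $r$, $(\mu_p(f,\phi)+\frac{\varepsilon}{2})\log\phi(2r)\le(\mu_p(f,\phi)+\varepsilon)\log\phi(r)$, i.e. $\{\phi(2r)\}^{\mu_p(f,\phi)+\varepsilon/2}\le\{\phi(r)\}^{\mu_p(f,\phi)+\varepsilon}$. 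Enlarging $N_0$ so that this last estimate holds throughout every $I_n$ with $n>N_0$, the inequalities combine to give $\log_{p-1}T(r,f)<\{\phi(r)\}^{\mu_p(f,\phi)+\varepsilon}$ for all $r\in F:=\bigcup_{n>N_0}I_n$.

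Finally I would check $lm(F)=+\infty$. The condition $(1+\frac{1}{n})r_n<r_{n+1}$ forces $\frac{n+1}{n+2}r_{n+1}>r_n$, so the intervals $I_n$ are pairwise disjoint, and $lm(F)=\sum_{n>N_0}lm(I_n)=\sum_{n>N_0}\log(1+\frac{1}{n})=+\infty$, exactly as in Lemma~\ref{L4a}. Everything here is routine bookkeeping once the interval is placed to the left of $r_n$; the only delicate point is the comparison $\{\phi(2r)\}^{\mu_p(f,\phi)+\varepsilon/2}\le\{\phi(r)\}^{\mu_p(f,\phi)+\varepsilon}$, which is precisely where the positivity of $\mu_p(f,\phi)$ and hypothesis $(ii)$ on $\phi$ are indispensable. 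One should also note that, since $\frac{n+1}{n}\le 2$ for every $n\ge 1$, a single use of $(ii)$ with the fixed value $\alpha=2$ makes the threshold in $r$ uniform in $n$, so that one common $N_0$ suffices.
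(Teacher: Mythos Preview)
Your proposal is correct and is precisely the detailed execution that the paper's one-line ``Similar as Lemma~\ref{L4a}'' stands in for. You have correctly identified and justified the one genuine adjustment the analogy requires---placing the fattening intervals $I_n=[\frac{n}{n+1}r_n,\,r_n]$ to the \emph{left} of $r_n$ so that monotonicity of $T(r,f)$ yields an upper bound---and your use of hypothesis~$(ii)$ on $\phi$ with the fixed dilation $\alpha=2$ to pass from $\phi(2r)$ to $\phi(r)$ is exactly the right way to make the estimate uniform in $n$.
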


\begin{proof}
Similar as Lemma \ref{L4a}.
\end{proof}

\begin{lemma}
\label{L4d}Let $f$ be a nonconstant meromorphic function with nonzero finite
iterated lower $p-\phi $ order, $\mu _{p}\left( f,\phi \right) $ and nonzero
finite iterated lower $p-\phi $ type, $t_{p}\left( f,\phi \right) .$ Then
for any given $\varepsilon >0,$ there exists a set $F\subset (1,+\infty )$
with $lm(F)=+\infty $ such that 
\begin{equation*}
\log _{p-1}T\left( r,\text{ }f\right) <\left( t_{p}\left( f,\phi \right)
+\varepsilon \right) \left\{ \phi (r)\right\} ^{\mu _{p}\left( f,\phi
\right) }.
\end{equation*}%
for $r\in F.$
\end{lemma}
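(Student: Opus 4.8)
The plan is to mirror the proof of Lemma \ref{L4b}, interchanging the roles of $\limsup $ and $\liminf $ (and of $\tau _{p},\rho _{p}$ with $t_{p},\mu _{p}$), reversing the inequality, and, crucially, placing the auxiliary intervals to the \emph{left} of the points $r_{n}$ rather than to the right, so that the monotonicity of $T(r,f)$ produces an \emph{upper} bound.

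First I would invoke the definition of the iterated lower $p-\phi $ type: since $t_{p}(f,\phi )=\liminf_{r\rightarrow \infty }\log _{p-1}T(r,f)/\{\phi (r)\}^{\mu _{p}(f,\phi )}$, there is an increasing sequence $\{r_{n}\}$ with $r_{n}\rightarrow \infty $ along which this ratio tends to $t_{p}(f,\phi )$, and after thinning the sequence I may assume $(1+\tfrac{1}{n})r_{n}<r_{n+1}$. Then, given $\varepsilon >0$, there is $N_{0}$ such that
\[
\log _{p-1}T(r_{n},f)<\left( t_{p}(f,\phi )+\tfrac{\varepsilon }{2}\right) \{\phi (r_{n})\}^{\mu _{p}(f,\phi )}\qquad (n>N_{0}).
\]

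Next, for $n>N_{0}$ and $r\in \left[ \tfrac{r_{n}}{1+1/n},\,r_{n}\right] $, I would combine three facts: $T(r,f)\leq T(r_{n},f)$ because $T$ is nondecreasing; $\phi (r_{n})\leq \phi \!\left( (1+\tfrac{1}{n})r\right) $ because $r\geq r_{n}/(1+1/n)$ and $\phi $ is nondecreasing; and condition $(ii)$ on $\phi $, which lets me replace $\phi \!\left( (1+\tfrac{1}{n})r\right) $ by $\phi (r)$ at the cost of absorbing $\tfrac{\varepsilon }{2}$ into $\varepsilon $ (here I use that $n\rightarrow \infty $ along the relevant range, so the dilation factor $1+\tfrac{1}{n}$ tends to $1$). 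Chaining these inequalities yields $\log _{p-1}T(r,f)<\left( t_{p}(f,\phi )+\varepsilon \right) \{\phi (r)\}^{\mu _{p}(f,\phi )}$ on each such interval. Finally I set $F=\bigcup_{n>N_{0}}\left[ \tfrac{r_{n}}{1+1/n},\,r_{n}\right] $; the gap condition makes the intervals pairwise disjoint, and $lm(F)=\sum_{n>N_{0}}\log \left( 1+\tfrac{1}{n}\right) =+\infty $, exactly as in Lemma \ref{L4a}.

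The step I expect to be delicate is the passage from $\{\phi (r_{n})\}^{\mu _{p}(f,\phi )}$ to $\{\phi (r)\}^{\mu _{p}(f,\phi )}$: unlike the lower-order statement in Lemma \ref{L4c}, here the exponent on both sides is exactly $\mu _{p}(f,\phi )$, so there is no exponent slack to spend, and one must genuinely control the ratio $\phi (r_{n})/\phi (r)$ uniformly over the interval. The argument hinges on the fact that $r$ ranges over $\left[ r_{n}/(1+1/n),\,r_{n}\right] $ with $n\rightarrow \infty $, so $r_{n}/r\leq 1+1/n\rightarrow 1$; invoking condition $(ii)$ for the dilation $1+1/n$, which is eventually as close to $1$ as we wish, forces $\phi (r_{n})/\phi (r)\rightarrow 1$, and this is what moves the error term from $t_{p}(f,\phi )+\varepsilon /2$ to $t_{p}(f,\phi )+\varepsilon $. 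This is precisely the analogue of the final displayed chain of inequalities in the proof of Lemma \ref{L4b}.
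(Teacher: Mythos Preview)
Your proposal is correct and matches the paper's approach: the paper's own proof of this lemma consists solely of the phrase ``Similar as Lemma \ref{L4b},'' and what you have written is precisely the natural adaptation---selecting a sequence realizing the $\liminf$, taking intervals $\left[\tfrac{r_{n}}{1+1/n},\,r_{n}\right]$ to the \emph{left} of $r_{n}$ so that monotonicity of $T(r,f)$ yields an upper bound, and summing $\log(1+\tfrac{1}{n})$ to get infinite logarithmic measure. Your explicit flag about the passage from $\{\phi(r_{n})\}^{\mu_{p}}$ to $\{\phi(r)\}^{\mu_{p}}$ via condition $(ii)$ is handled at exactly the same level of rigor as the corresponding step in the paper's proof of Lemma \ref{L4b}, so no additional justification is expected here.
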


\begin{proof}
Similar as Lemma \ref{L4b}.
\end{proof}

\begin{lemma}
\label{L5a}Let $f$ be a nonconstant meromorphic function with $i_{\rho
}(f,\phi )=p~$and $\rho _{p}\left( f,\phi \right) =\rho $. Also let $%
z_{1},z_{2}\in \mathbb{C~}(z_{1}\neq z_{2})$. Then for any $\varepsilon >0$
sufficiently small, we have%
\begin{equation*}
m\left( r,\frac{f\left( z+z_{1}\right) }{f\left( z+z_{2}\right) }\right)
=O\left( \exp _{p-1}\left\{ \phi (r)\right\} ^{\rho +\varepsilon }\right) .
\end{equation*}
\end{lemma}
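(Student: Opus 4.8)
The plan is to bound the proximity term by characteristic functions and then translate those into an iterated $p-\phi$ order estimate. Put $g(z)=\frac{f(z+z_{1})}{f(z+z_{2})}$. Using $m(r,g)\le T(r,g)$ together with the first main theorem, one has
\begin{equation*}
m\!\left(r,\frac{f(z+z_{1})}{f(z+z_{2})}\right)\le T\!\left(r,f(z+z_{1})\right)+T\!\left(r,f(z+z_{2})\right)+O(1),
\end{equation*}
and then by Lemma \ref{L2} each term on the right is at most $(1+o(1))\,T\!\left(r+|z_{1}|+|z_{2}|,f\right)$ for all large $r$, since $T(\cdot,f)$ is nondecreasing. (If instead one wants to exploit the sharp difference analogue of the logarithmic derivative lemma, Lemma \ref{L3} bounds $m(r,g)$ by $o\!\left(\{T(r+|z_{1}-z_{2}|+|z_{2}|,f)\}^{1+\varepsilon}/r^{\delta}\right)$ off a set of finite logarithmic measure; the conversion below still applies, but it then carries an exceptional set, so I shall keep the cruder bound, which is valid for every large $r$ and already suffices.)

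Next I would invoke the Remark which, since $i_{\rho}(f,\phi)=p$ and $\rho_{p}(f,\phi)=\rho$, gives for every $\epsilon>0$ a constant $C>0$ such that $T(s,f)\le C\exp_{p-1}\{\phi(s)\}^{\rho+\epsilon}$ for all large $s$. Taking $s=r+|z_{1}|+|z_{2}|$ and using that $\phi$ is nondecreasing with $r+|z_{1}|+|z_{2}|\le 2r$ for large $r$, property $(ii)$ of $\phi$ gives $\log\phi(r+|z_{1}|+|z_{2}|)\le\log\phi(2r)=(1+o(1))\log\phi(r)$, whence $\{\phi(r+|z_{1}|+|z_{2}|)\}^{\rho+\epsilon}\le\{\phi(r)\}^{\rho+\epsilon'}$ for any fixed $\epsilon'>\epsilon$ and all large $r$. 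As $\exp_{p-1}$ is increasing, this produces $T\!\left(r+|z_{1}|+|z_{2}|,f\right)\le C\exp_{p-1}\{\phi(r)\}^{\rho+\epsilon'}$.

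It then remains to combine the two estimates and absorb the numerical factor $2(1+o(1))$ and the slack between $\epsilon'$ and the prescribed $\varepsilon$. For $p=1$ this is immediate: $2(1+o(1))\{\phi(r)\}^{\rho+\epsilon'}\le\{\phi(r)\}^{\rho+\varepsilon}$ for large $r$. For $p\ge 2$ one writes $\exp_{p-1}(y)=\exp\!\left(\exp_{p-2}(y)\right)$, so the factor $2(1+o(1))$ only adds a bounded amount inside $\exp_{p-2}$, while replacing the exponent $\rho+\epsilon'$ by $\rho+\varepsilon$ enlarges $\exp_{p-2}\{\phi(r)\}^{\rho+\epsilon'}$ by an amount tending to $\infty$ as $r\to\infty$ (using $\phi(r)\to\infty$); choosing $0<\epsilon<\epsilon'\le\varepsilon$ small enough then yields $m\!\left(r,\frac{f(z+z_{1})}{f(z+z_{2})}\right)\le\exp_{p-1}\{\phi(r)\}^{\rho+\varepsilon}$ for all large $r$, as claimed.

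The only genuinely delicate part is this last bookkeeping with iterated exponentials: one must verify that the shifts $|z_{j}|$, the multiplicative constants, and the power $1+\varepsilon$ (if Lemma \ref{L3} is used) can all be swallowed by an arbitrarily small increase in the exponent of $\phi(r)$. Property $(ii)$ of $\phi$ and the monotonicity and rapid growth of $\exp_{n}$ are precisely what make this work; the remainder of the argument is routine.
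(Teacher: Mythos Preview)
Your proof is correct and follows the same overall scheme as the paper: bound the proximity term by something of size $O(T(\alpha r,f))$, then feed in the growth bound $T(r,f)\le \exp_{p-1}\{\phi(r)\}^{\rho+\varepsilon/2}$ coming from $i_{\rho}(f,\phi)=p$, and finally absorb the dilation of the argument using property $(ii)$ of $\phi$. The only real difference lies in the first step: the paper simply quotes an external result (\cite{bel}, Lemma~2.9) to obtain $m\bigl(r,\tfrac{f(z+z_{1})}{f(z+z_{2})}\bigr)\le M\cdot T(3r,f)$ directly, whereas you derive an equivalent bound from scratch via $m\le T$, the first main theorem, and Lemma~\ref{L2}. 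Your route is more self-contained and avoids the external citation; the paper's is shorter on the page. Your careful discussion of how the multiplicative constants and the argument shift are swallowed by the exponent increase is more explicit than the paper's, which simply writes $\exp_{p-1}\{\phi(3r)\}^{\rho+\varepsilon/2}\le \exp_{p-1}\{\phi(r)\}^{\rho+\varepsilon}$ without further comment.
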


\begin{proof}
By using similar arguments as in the proof of (\cite{bel}, Lemma 2.9) we get
for some $M>0$,%
\begin{equation*}
m\left( r,\frac{f\left( z+z_{1}\right) }{f\left( z+z_{2}\right) }\right)
\leq M.T\left( 3r,f\right) ,
\end{equation*}

Since $f$ has finite iterated $p-\phi $ order, $\rho _{p}\left( f,\phi
\right) =\rho ,$ we have for any $\varepsilon $%
\begin{equation*}
T\left( r,f\right) \leq \exp _{p-1}\left\{ \phi (r)\right\} ^{\rho +\frac{%
\varepsilon }{2}}
\end{equation*}

Combining above two results we get,%
\begin{eqnarray*}
m\left( r,\frac{f\left( z+z_{1}\right) }{f\left( z+z_{2}\right) }\right)
&\leq &M.T\left( 3r,f\right) \leq M.\exp _{p-1}\left\{ \phi (3r)\right\}
^{\rho +\frac{\varepsilon }{2}} \\
&\leq &M.\exp _{p-1}\left\{ \phi (r)\right\} ^{\rho +\varepsilon }.
\end{eqnarray*}

This completes the proof.
\end{proof}

\begin{lemma}
\label{L5b}Let $f$ be a nonconstant meromorphic function with $i_{\rho
}(f,\phi )=p~$and $\mu _{p}\left( f,\phi \right) =\mu $. Also let $%
z_{1},z_{2}\in \mathbb{C~}(z_{1}\neq z_{2})$. Then for any $\varepsilon >0$
sufficiently small, there exists a set $F\subset (1,+\infty )$ with $%
lm(F)=+\infty $ such that 
\begin{equation*}
m\left( r,\frac{f\left( z+z_{1}\right) }{f\left( z+z_{2}\right) }\right)
=O\left( \exp _{p-1}\left\{ \phi (r)\right\} ^{\mu +\varepsilon }\right) ,
\end{equation*}%
for $r\in F.$
\end{lemma}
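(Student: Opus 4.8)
The plan is to imitate the proof of Lemma \ref{L5a} essentially verbatim, modifying only the single step in which finiteness of the iterated $p-\phi$ order is used, and replacing it by an estimate for the iterated lower $p-\phi$ order that holds on a set of infinite logarithmic measure. First I would record, exactly as in the proof of Lemma \ref{L5a} (which in turn rests on (\cite{bel}, Lemma 2.9)), that there is a constant $M>0$ with
\[
m\left(r,\frac{f(z+z_1)}{f(z+z_2)}\right)\le M\,T(3r,f)
\]
for all sufficiently large $r$; this part uses no growth hypothesis and carries over without change. It therefore remains to bound $T(3r,f)$ above by $\exp_{p-1}\{\phi(r)\}^{\mu+\varepsilon}$ on a set $F$ with $lm(F)=+\infty$.

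For that, I would apply Lemma \ref{L4c} with $\varepsilon/2$ in place of $\varepsilon$: there is a set $F_{0}\subset(1,+\infty)$ with $lm(F_{0})=+\infty$ such that $\log_{p-1}T(r,f)<\{\phi(r)\}^{\mu+\varepsilon/2}$, i.e. $T(r,f)<\exp_{p-1}\{\phi(r)\}^{\mu+\varepsilon/2}$, for every $r\in F_{0}$ (if $\mu=0$ one argues the same way directly from the definition of $\mu_{p}(f,\phi)$). I would then set $F=\{\,r>1:3r\in F_{0}\,\}$; since logarithmic measure is invariant under rescaling of the variable, $lm(F)=+\infty$. For $r\in F$ we have $3r\in F_{0}$, hence $T(3r,f)<\exp_{p-1}\{\phi(3r)\}^{\mu+\varepsilon/2}$, and condition $(ii)$ on $\phi$ (with $\alpha=3$) gives $\log\phi(3r)=(1+o(1))\log\phi(r)$, so $\{\phi(3r)\}^{\mu+\varepsilon/2}\le\{\phi(r)\}^{\mu+\varepsilon}$ for all large $r$; since $\exp_{p-1}$ is increasing, $T(3r,f)\le\exp_{p-1}\{\phi(r)\}^{\mu+\varepsilon}$. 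Combining with the first display,
\[
m\left(r,\frac{f(z+z_1)}{f(z+z_2)}\right)\le M\,T(3r,f)\le M\exp_{p-1}\{\phi(r)\}^{\mu+\varepsilon}
\]
for $r\in F$, which is the assertion.

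The only genuinely new point compared with Lemma \ref{L5a} is the replacement of a pointwise bound by one holding only on a set: $\mu_{p}(f,\phi)$ is a liminf, so $T$ is small merely on a set of infinite logarithmic measure, and that control must moreover be transported to the shifted argument $3r$. I expect this to be the sole obstacle, and it is a mild one — the scale-invariance of logarithmic measure deals with the shift at the level of sets, and hypothesis $(ii)$ on $\phi$ is precisely what allows the bounded rescaling $r\mapsto 3r$ to be absorbed into an arbitrarily small increase of the exponent. Everything else is identical to the proof of Lemma \ref{L5a}.
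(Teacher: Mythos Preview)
Your proof is correct and follows essentially the same route as the paper's: bound $m(r,\cdot)$ by $M\,T(3r,f)$ via (\cite{bel}, Lemma 2.9), then invoke Lemma \ref{L4c} with $\varepsilon/2$ and absorb the rescaling $r\mapsto 3r$ using hypothesis $(ii)$ on $\phi$. Your argument is in fact slightly more careful than the paper's on one point: the paper applies Lemma \ref{L4c} at $r$ but then silently uses the resulting bound at $3r$, whereas you explicitly transport the set via $F=\{r:3r\in F_{0}\}$ and appeal to the scale-invariance of logarithmic measure.
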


\begin{proof}
As in the previous lemma for some $M>0$, we get%
\begin{equation*}
m\left( r,\frac{f\left( z+z_{1}\right) }{f\left( z+z_{2}\right) }\right)
\leq M.T\left( 3r,f\right) ,
\end{equation*}

Since $f$ has finite iterated lower $p-\phi $ order, $\mu _{p}\left( f,\phi
\right) =\mu ,$ we have from Lemma \ref{L4c}, for any $\varepsilon >0$ there
exists a set $F\subset (1,+\infty )$ with $lm(F)=+\infty $ such that 
\begin{equation*}
T\left( r,f\right) \leq \exp _{p-1}\left\{ \phi (r)\right\} ^{\mu +\frac{%
\varepsilon }{2}},
\end{equation*}%
for $r\in F.$

Combining above two results we get,%
\begin{eqnarray*}
m\left( r,\frac{f\left( z+z_{1}\right) }{f\left( z+z_{2}\right) }\right)
&\leq &M.T\left( 3r,f\right) \leq M.\exp _{p-1}\left\{ \phi (3r)\right\}
^{\mu +\frac{\varepsilon }{2}} \\
&\leq &M.\exp _{p-1}\left\{ \phi (r)\right\} ^{\mu +\varepsilon },\text{ }
\end{eqnarray*}%
for $r\in F.$

This completes the proof.
\end{proof}

\begin{lemma}
\label{L5}Let $f$ be a nonconstant meromorphic function with $i_{\rho
}(f,\phi )=p~$and $\rho _{p}\left( f,\phi \right) =\rho $. Suppose $z_{1}\in 
\mathbb{C~}(z_{1}\neq 0)$ and $\varepsilon >0$ be any given constant
sufficiently small. Then there exists a set $F\subset (1,+\infty )$ with $%
lm(F)<\infty $ such that%
\begin{equation*}
\left\vert \frac{f\left( z+z_{1}\right) }{f\left( z\right) }\right\vert \leq
\exp _{p}\left\{ \phi (r)\right\} ^{\rho +\varepsilon },
\end{equation*}%
for $r\notin F\cup \left[ 0,1\right] .$
\end{lemma}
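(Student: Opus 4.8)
The plan is to bound the pointwise modulus $\left|\frac{f(z+z_1)}{f(z)}\right|$ on $|z|=r$ by the proximity function $m\!\left(r,\frac{f(z+z_1)}{f(z)}\right)$ together with a standard Borel-type argument, then feed in the growth estimate coming from $\rho_p(f,\phi)=\rho$. First I would apply Lemma~\ref{L1} with the given $z_1$, a fixed $\delta<1$ and a fixed small $\varepsilon>0$: this produces a set $F_1\subset(1,+\infty)$ with $lm(F_1)<\infty$ such that, for $r\notin F_1$,
\[
m\!\left(r,\frac{f(z+z_1)}{f(z)}\right)=o\!\left(\frac{\{T(r+|z_1|,f)\}^{1+\varepsilon}}{r^{\delta}}\right)
=O\!\left(\{T(r+|z_1|,f)\}^{1+\varepsilon}\right).
\]
Since $f$ has finite iterated $p$-$\phi$ order $\rho$, Remark~4 (the growth remark) gives $T(s,f)=O\!\left(\exp_{p-1}\{\phi(s)\}^{\rho+\varepsilon'}\right)$ for every $\varepsilon'>0$; applying this at $s=r+|z_1|$ and using condition $ii)$ on $\phi$ (namely $\log\phi(r+|z_1|)\le\log\phi(2r)\sim\log\phi(r)$ for large $r$) lets me replace $\phi(r+|z_1|)$ by $\phi(r)$ at the cost of a slightly larger exponent, so that
\[
m\!\left(r,\frac{f(z+z_1)}{f(z)}\right)\le \exp_{p-1}\{\phi(r)\}^{\rho+\varepsilon/2}
\]
for $r\notin F_1$ and $r$ large.

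Next I would pass from the mean-value estimate to a pointwise estimate. The function $g(z)=\frac{f(z+z_1)}{f(z)}$ is meromorphic, and a standard fact (essentially the Borel–Carathéodory / Poisson–Jensen type inequality, or the elementary bound $\log^+|g(z)|\le \frac{R+r}{R-r}\,m(R,g)+\text{(counting term)}$ on $|z|=r$ with $R=2r$) shows that outside a small exceptional set one has $\log^+\left|g(z)\right|\le C\,T(2r,g)$ for all $z$ with $|z|=r$. Because $T(r,g)\le m(r,g)+N(r,g)$ and both $N(r,g)$ and the contribution of poles are themselves $O(T(3r,f))$ by the same counting argument used in Lemma~\ref{L5a}, I can absorb everything into a single bound $\log^+|g(z)|\le M\,T(3r,f)$ valid for $|z|=r$, $r\notin F$, where $F=F_1\cup F_2$ still has finite logarithmic measure ($F_2$ being the exceptional set from the Borel-type step). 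Then invoking the growth remark once more, $T(3r,f)\le \exp_{p-1}\{\phi(3r)\}^{\rho+\varepsilon/2}\le\exp_{p-1}\{\phi(r)\}^{\rho+\varepsilon}$ by condition $ii)$, so
\[
\log^+\left|\frac{f(z+z_1)}{f(z)}\right|\le \exp_{p-1}\{\phi(r)\}^{\rho+\varepsilon},
\]
which upon exponentiating yields exactly $\left|\frac{f(z+z_1)}{f(z)}\right|\le\exp_p\{\phi(r)\}^{\rho+\varepsilon}$ for $r\notin F\cup[0,1]$.

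The main obstacle is the passage from the averaged bound on $m(r,g)$ to a genuinely pointwise bound on $|g(z)|$ on the whole circle $|z|=r$: the proximity function only controls an integral mean, so one needs a Borel-type lemma guaranteeing that the set of radii where the maximum of $\log^+|g|$ is much larger than its mean has finite logarithmic measure. This is where the exceptional set $F$ genuinely enters, and care is needed to check that this extra exceptional set has finite logarithmic measure and that the poles of $g$ (zeros and poles of $f$, shifted) do not spoil the estimate — handled by the counting-function bound as in Lemma~\ref{L5a}. The dependence of $\phi$ on dilations, governed by condition $ii)$, is used freely throughout to replace $\phi(\alpha r)$ by $\phi(r)$ after enlarging $\varepsilon$ slightly; this is routine given the hypothesis.
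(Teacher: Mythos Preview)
Your route diverges from the paper's precisely at the point you flag as the ``main obstacle,'' and that obstacle is real. The paper does not go through the proximity function at all: it quotes directly a known \emph{pointwise} estimate (arguing as in \cite{bel}, Lemma~2.3, which ultimately rests on Chiang--Feng type machinery) to the effect that there exist constants $B>0$, $\lambda>1$ and a set $F\subset(1,+\infty)$ of finite logarithmic measure with
\[
\left|\,\log\left|\frac{f(z+z_{1})}{f(z)}\right|\,\right|\le B\,T(\lambda r,f)\,\frac{\log^{\lambda} r}{r}\,\log T(\lambda r,f)
\]
for every $z$ with $|z|=r\notin F\cup[0,1]$. Feeding $T(r,f)\le\exp_{p-1}\{\phi(r)\}^{\rho+\varepsilon/2}$ into the right-hand side and using condition $ii)$ on $\phi$ then finishes the proof immediately.

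Your detour through Lemma~\ref{L1} buys you only the averaged bound on $m(r,g)$, and the step you then need---$\log^{+}|g(z)|\le C\,T(2r,g)$ uniformly over the whole circle $|z|=r$, for $r$ outside a set of finite logarithmic measure---is \emph{not} a standard fact for a general meromorphic $g$. The Poisson--Jensen formula does express $\log|g(z)|$ as an integral term plus a sum over zeros and poles, but that sum contains terms of size $\log|z-a|^{-1}$ for each zero or pole $a$ of $g$ in the disk, and controlling these uniformly on the circle forces you to show that $|z|=r$ stays quantitatively far from \emph{every} such $a$. Arranging this, with an exceptional set of only finite logarithmic measure, is exactly the substance of the Gundersen/Chiang--Feng pointwise lemmas; it does not follow from the crude bound $N(r,g)=O(T(3r,f))$, which counts poles but says nothing about their positions relative to the circle. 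In short, filling your gap would amount to reproving the pointwise estimate the paper simply cites, so as written your proposal has not reduced the problem. (Incidentally, the appeal to Lemma~\ref{L1} is itself superfluous even within your scheme: the trivial inequality $m(r,g)\le T(r,f(z+z_{1}))+T(r,f)+O(1)$ already gives $m(r,g)=O(T(3r,f))$ without any exceptional set.)
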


\begin{proof}
By using similar arguments as in the proof of (\cite{bel}, Lemma 2.3), there
exists a set $F\subset (1,+\infty )$ with $lm(F)=+\infty $ such that for
some $B>0,~\lambda >1$, we have%
\begin{equation*}
\left\vert \log \left\vert \frac{f\left( z+z_{1}\right) }{f\left( z\right) }%
\right\vert \right\vert \leq B\left\{ T\left( \lambda r,f\right) \frac{\log
^{\lambda }r}{r}\log T\left( \lambda r,f\right) \right\} ,
\end{equation*}%
for $r\notin F\cup \left[ 0,1\right] .$ Again since, $f$ has finite iterated 
$p-\phi $ order, $\rho _{p}\left( f,\phi \right) =\rho ,$ we have for any
given $\varepsilon >0,$ 
\begin{equation*}
T\left( r,f\right) \leq \exp _{p-1}\left\{ \phi (r)\right\} ^{\rho +\frac{%
\varepsilon }{2}}
\end{equation*}

Combining above two results we get%
\begin{eqnarray*}
\left\vert \log \left\vert \frac{f\left( z+z_{1}\right) }{f\left( z\right) }%
\right\vert \right\vert &\leq &B\exp _{p-1}\left\{ \phi (\lambda r)\right\}
^{\rho +\frac{\varepsilon }{2}}\frac{\log ^{\lambda }r}{r}\exp _{p-2}\left\{
\phi (\lambda r)\right\} ^{\rho +\frac{\varepsilon }{2}} \\
&\leq &\exp _{p-1}\left\{ \phi (r)\right\} ^{\rho +\varepsilon },\text{ for }%
r\notin F\cup \left[ 0,1\right] . \\
\text{Therefore, }\left\vert \frac{f\left( z+z_{1}\right) }{f\left( z\right) 
}\right\vert &\leq &\exp _{p}\left\{ \phi (r)\right\} ^{\rho +\varepsilon },%
\text{ for }r\notin F\cup \left[ 0,1\right] .
\end{eqnarray*}

Thus Lemma \ref{L5} is proved.
\end{proof}

\begin{lemma}
\label{L5c}Let $f$ be a nonconstant meromorphic function with $i_{\rho
}(f,\phi )=p~$and $\rho _{p}\left( f,\phi \right) =\rho $. Suppose $%
z_{1},z_{2}\in \mathbb{C~}(z_{1}\neq z_{2})$ and $\varepsilon >0$ be any
given constant sufficiently small. Then there exists a set $F\subset
(1,+\infty )$ with $lm(F)<\infty $ such that%
\begin{equation*}
\left\vert \frac{f\left( z+z_{1}\right) }{f\left( z+z_{2}\right) }%
\right\vert \leq \exp _{p}\left\{ \phi (r)\right\} ^{\rho +\varepsilon },
\end{equation*}%
for $r\notin F\cup \left[ 0,1\right] .$
\end{lemma}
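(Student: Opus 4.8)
The plan is to reduce Lemma~\ref{L5c} to Lemma~\ref{L5} by factoring the quotient through the value $f(z)$, namely
$$\frac{f(z+z_1)}{f(z+z_2)}=\frac{f(z+z_1)}{f(z)}\cdot\left(\frac{f(z+z_2)}{f(z)}\right)^{-1}.$$
The key observation is that the estimate underlying Lemma~\ref{L5} — the difference analogue of the logarithmic derivative lemma from $(\cite{bel}$, Lemma~2.3$)$, as it is actually used in the proof of Lemma~\ref{L5} — bounds $\bigl|\log\bigl|f(z+z_j)/f(z)\bigr|\bigr|$, i.e.\ the \emph{absolute value} of the logarithm, and therefore yields a two-sided bound
$$\exp_{p}\{\phi(r)\}^{-(\rho+\varepsilon')}\le \left|\frac{f(z+z_j)}{f(z)}\right|\le \exp_{p}\{\phi(r)\}^{\rho+\varepsilon'}\qquad(j=1,2)$$
for $r\notin F_j\cup[0,1]$, where $F_j\subset(1,+\infty)$ has $lm(F_j)<\infty$ and $\varepsilon'>0$ is as small as we please. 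Crucially, both quotients are taken with the \emph{same} base point $z$, so $|z|=r$ in each and there is no shifted radius to track.

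Concretely, I would fix $\varepsilon'=\varepsilon/2$, apply Lemma~\ref{L5} (in the two-sided form just described) once with the shift $z_1$ and once with the shift $z_2$, and set $F=F_1\cup F_2$, so that $lm(F)\le lm(F_1)+lm(F_2)<\infty$. For $r\notin F\cup[0,1]$ this gives
$$\left|\frac{f(z+z_1)}{f(z+z_2)}\right|\le \exp_{p}\{\phi(r)\}^{\rho+\varepsilon/2}\cdot \exp_{p}\{\phi(r)\}^{\rho+\varepsilon/2}\le \exp_{p}\{\phi(r)\}^{\rho+\varepsilon},$$
where the last step uses $\exp_{p}(x)^{2}\le\exp_{p}(2x)$ for large $x$ together with $2\{\phi(r)\}^{\rho+\varepsilon/2}\le\{\phi(r)\}^{\rho+\varepsilon}$ for large $r$, the latter because $\phi$ is unbounded and nondecreasing. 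Equivalently, one may rerun the computation in the proof of Lemma~\ref{L5} verbatim on the sum $\bigl|\log|f(z+z_1)/f(z)|\bigr|+\bigl|\log|f(z+z_2)/f(z)|\bigr|$, using $T(r,f)\le\exp_{p-1}\{\phi(r)\}^{\rho+\varepsilon/2}$ and absorbing the factor $\log^{\lambda}r/r$, the auxiliary $\exp_{p-2}$ term, and the dilation $\phi(\lambda r)$ — controlled by property $(ii)$ of $\phi$ — into the exponent.

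I expect the only genuine point (rather than routine bookkeeping) to be the following: one should \emph{not} try to obtain the statement by substituting the shifted point $w=z+z_2$ into Lemma~\ref{L5}, because the exceptional set of Lemma~\ref{L5} then lives in the variable $|w|=|z+z_2|$, and its ``$|z_2|$-fattening'' back to the variable $r$ need not have finite logarithmic measure; splitting the quotient at the \emph{fixed} base point $z$ circumvents this. Apart from that, one must check the degenerate cases $p=1$ and $p=2$, where the conventions $\exp_{0}r=r$ and $\exp_{-1}r=\log^{+}r$ apply so that the auxiliary $\exp_{p-2}$ factor collapses correctly — exactly as already handled in the proof of Lemma~\ref{L5}.
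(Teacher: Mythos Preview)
Your argument is correct, but the paper takes a shorter and somewhat different route. Instead of factoring through $f(z)$, the paper writes
\[
\frac{f(z+z_1)}{f(z+z_2)}=\frac{g\bigl(z+(z_1-z_2)\bigr)}{g(z)},\qquad g(z):=f(z+z_2),
\]
invokes Lemma~\ref{L2} to obtain $\rho_p(g,\phi)=\rho$, and then applies Lemma~\ref{L5} \emph{directly to the function $g$} with the single shift $z_1-z_2$. Your caution about ``substituting the shifted point $w=z+z_2$ into Lemma~\ref{L5}'' does not bite against this: the paper is not plugging $w=z+z_2$ into the conclusion of Lemma~\ref{L5} for $f$, but is applying the lemma afresh to the meromorphic function $g$, so the exceptional set it produces is already in the variable $r=|z|$ and has finite logarithmic measure by the lemma's own guarantee---no fattening is needed. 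What your route buys is independence from Lemma~\ref{L2} and an explicit exploitation of the two-sidedness of the underlying logarithmic-difference estimate; the cost is the extra bookkeeping of merging two exceptional sets and absorbing the squared bound into the exponent. One minor notational point: your displayed lower bound $\exp_p\{\phi(r)\}^{-(\rho+\varepsilon')}$ should be read as $\bigl(\exp_p\{\phi(r)\}^{\rho+\varepsilon'}\bigr)^{-1}$, which is what the bound on $\bigl|\log|f(z+z_j)/f(z)|\bigr|$ actually gives, rather than $\exp_p\bigl(\{\phi(r)\}^{-(\rho+\varepsilon')}\bigr)$.
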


\begin{proof}
We can write%
\begin{equation*}
\left\vert \frac{f\left( z+z_{1}\right) }{f\left( z+z_{2}\right) }%
\right\vert =\left\vert \frac{f\left( z+z_{2}+z_{1}-z_{2}\right) }{f\left(
z+z_{2}\right) }\right\vert .
\end{equation*}

By Lemma \ref{L2}, we have $\rho _{p}\left( f\left( z+z_{2}\right) ,\phi
\right) =\rho _{p}\left( f,\phi \right) =\rho .$ Again since $%
z_{1}-z_{2}\neq 0,~$by Lemma \ref{L5}, there exists a set $F\subset
(1,+\infty )$ with $lm(F)=+\infty $ such that for given any $\varepsilon >0,$
we get 
\begin{equation*}
\left\vert \frac{f\left( z+z_{2}+z_{1}-z_{2}\right) }{f\left( z+z_{2}\right) 
}\right\vert \leq \exp _{p}\left\{ \phi (r)\right\} ^{\rho +\varepsilon },%
\text{ for }r\notin F\cup \left[ 0,1\right] .
\end{equation*}

Thus Lemma \ref{L5c} is proved.
\end{proof}

\section{Main Results}

We now state and prove our main theorems. In Theorem \ref{T1} and Theorem %
\ref{T2} we consider homogeneous linear difference equation (\ref{1h}) with
exactly one coefficient having maximal iterated $p-\phi $ order and iterated
lower $p-\phi $ order respectively.

\begin{theorem}
\label{T1}Let $A_{j}(z)(j=0,1,\ldots ,k)$ be meromorphic functions and $%
p=\max \{i_{\rho }(A_{j},\phi ):j=0,1,\ldots ,n-1\}$. \bigskip If there
exits an $A_{m}(z)(0\leq m\leq k)$ such that 
\begin{eqnarray*}
\lambda _{p}\left( \frac{1}{A_{m}},\phi \right) &<&\rho _{p}\left(
A_{m},\phi \right) <+\infty , \\
\text{and }\max \{\rho _{p}\left( A_{j},\phi \right) &:&j=0,1,\ldots
,k,~j\neq m\}<\rho _{p}\left( A_{m},\phi \right)
\end{eqnarray*}%
then every solution $f(z)~(f\neq 0)$ of (\ref{1h}) satisfies $i_{\rho
}(f,\phi )\geq p~$and $\rho _{p}\left( f,\phi \right) \geq \rho _{p}\left(
A_{m},\phi \right) .$
\end{theorem}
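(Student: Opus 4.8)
The plan is to argue by contradiction in the standard way: suppose $f\neq 0$ is a solution of (\ref{1h}) with either $i_{\rho}(f,\phi)<p$, or $i_{\rho}(f,\phi)=p$ but $\rho_{p}(f,\phi)<\rho_{p}(A_{m},\phi)$. In either case the growth of $T(r,f)$ is dominated, in the scale $\log_{p-1}(\cdot)$ versus $\{\phi(r)\}^{(\cdot)}$, by a quantity strictly smaller than $\rho_{p}(A_{m},\phi)$; write $\rho_{m}:=\rho_{p}(A_{m},\phi)$ and pick $\beta$ with $\max\{\rho_{p}(A_{j},\phi):j\neq m\}<\beta<\rho_{m}$, also exceeding the effective $p$-$\phi$ order of $f$.

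First I would rewrite (\ref{1h}) by isolating the $A_{m}$ term:
\begin{equation*}
A_{m}(z)f(z+c_{m})=-\sum_{\substack{j=0\\ j\neq m}}^{k}A_{j}(z)f(z+c_{j}),
\end{equation*}
so that
\begin{equation*}
|A_{m}(z)|\leq \sum_{\substack{j=0\\ j\neq m}}^{k}|A_{j}(z)|\left|\frac{f(z+c_{j})}{f(z+c_{m})}\right|.
\end{equation*}
Next, I would pass to the Nevanlinna characteristic via the first main theorem: $m(r,A_{m})\leq \sum_{j\neq m} m(r,A_{j})+\sum_{j\neq m} m\!\left(r,\dfrac{f(z+c_{j})}{f(z+c_{m})}\right)+O(1)$. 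The terms $m(r,A_{j})\le T(r,A_j)$ are controlled by Remark after Definition of iterated order (so $\le \exp_{p-1}\{\phi(r)\}^{\beta}$ for large $r$, since $\rho_p(A_j,\phi)<\beta$ for all $j\neq m$, and those $A_j$ with $i_\rho<p$ are even smaller). The difference-quotient terms are handled by Lemma \ref{L5c} (with the $z_1-z_2=c_j-c_m\neq 0$): outside a set of finite logarithmic measure,
\begin{equation*}
\left|\frac{f(z+c_{j})}{f(z+c_{m})}\right|\leq \exp_{p}\{\phi(r)\}^{\beta},
\end{equation*}
using that $\rho_{p}(f,\phi)<\beta$ (or $i_\rho(f,\phi)<p$, which makes this bound trivially valid). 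Hence on $|z|=r$, $r\notin F\cup[0,1]$ with $lm(F)<\infty$, one gets $m(r,A_{m})\leq \exp_{p-1}\{\phi(r)\}^{\beta}+\exp_{p}\{\phi(r)\}^{\beta}+O(\text{lower})$, and after absorbing, $T(r,A_{m})\le m(r,A_m)+N(r,A_m)$. But $N(r,A_{m})$ is controlled by the hypothesis $\lambda_{p}(1/A_{m},\phi)<\rho_{m}$: choosing $\beta$ also above $\lambda_{p}(1/A_{m},\phi)$, we have $N(r,A_{m})\leq \exp_{p-1}\{\phi(r)\}^{\beta}$ for large $r$. Therefore $T(r,A_{m})\leq \exp_{p}\{\phi(r)\}^{\beta}$ for all $r$ outside a set of finite logarithmic measure, which by the definition of $\rho_{p}(A_{m},\phi)$ (taking $\limsup$ along $r\notin F$, possible since $F$ has finite measure) yields $\rho_{p}(A_{m},\phi)\leq \beta<\rho_{m}$, a contradiction. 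This forces $i_{\rho}(f,\phi)\geq p$ and, when equality holds, $\rho_{p}(f,\phi)\geq \rho_{p}(A_{m},\phi)$.

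The main obstacle I expect is bookkeeping the two exceptional regimes uniformly: the case $i_{\rho}(f,\phi)<p$ versus $i_{\rho}(f,\phi)=p$, $\rho_p(f,\phi)<\rho_m$, must both be squeezed into a single bound $T(r,A_m)\le \exp_p\{\phi(r)\}^{\beta}$, and I must make sure Lemma \ref{L5c} is applicable in the first case (it is stated for $i_\rho(f,\phi)=p$; but a solution of (\ref{1h}) with nonconstant meromorphic coefficients and $i_\rho<p$ still has finite characteristic growth, so a trivial majorant replaces the lemma, or one notes the lemma's proof only uses an upper bound on $T(r,f)$). A secondary subtlety is that the final $\limsup$ must be taken while avoiding $F$; since $lm(F)<\infty$, the complement is unbounded and the definition of iterated $p$-$\phi$ order is unaffected — but one should invoke the standard fact that a set of finite logarithmic measure does not meet every interval $[r,2r]$ for large $r$, so sequences $r_n\to\infty$ with $r_n\notin F$ exist, and the $\phi$-condition $(ii)$ lets us compare $\phi(2r)$ with $\phi(r)$ to absorb the constant $2$ (as already done in Lemma \ref{L5a}).
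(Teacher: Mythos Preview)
Your contradiction strategy works and is genuinely different from the paper's proof. The paper argues \emph{directly}: it never assumes anything about $\rho_p(f,\phi)$, but instead uses Lemma~\ref{L3} (the difference logarithmic-derivative estimate) to write $m\bigl(r,\tfrac{f(z+c_j)}{f(z+c_m)}\bigr)=o\bigl(T(r+3C,f)^{1+\varepsilon}/r^{\delta}\bigr)$ outside a set of finite logarithmic measure, combines this with Lemma~\ref{L4a} to find a set of \emph{infinite} logarithmic measure on which $T(r,A_m)\ge\exp_{p-1}\{\phi(r)\}^{\rho-\varepsilon}$, and reads off $\{T(2r,f)\}^{2}\ge O(\exp_{p-1}\{\phi(r)\}^{\rho-\varepsilon})$ there. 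Your route---assume $\rho_p(f,\phi)<\rho_m$ and bound the quotients via Lemma~\ref{L5c} (or~\ref{L5a})---is exactly the template the paper itself uses later for Theorem~\ref{T5}; it is equally valid for Theorem~\ref{T1}, and your remark that those lemmas only need an \emph{upper} bound on $T(r,f)$ (hence also cover the case $i_\rho(f,\phi)<p$) is correct. The paper's direct argument has the advantage of yielding a quantitative lower bound on $T(2r,f)$ without any hypothesis on $f$; your contradiction argument is slightly shorter.

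One index slip to fix: from Lemma~\ref{L5c} you obtain $\bigl|\tfrac{f(z+c_j)}{f(z+c_m)}\bigr|\le\exp_{p}\{\phi(r)\}^{\beta}$, so after taking $\log^{+}$ the proximity term is bounded by $\exp_{p-1}\{\phi(r)\}^{\beta}$, not $\exp_{p}\{\phi(r)\}^{\beta}$. Hence the correct conclusion is $T(r,A_m)\le O(\exp_{p-1}\{\phi(r)\}^{\beta})$, which does give $\rho_{p}(A_m,\phi)\le\beta$ and closes the contradiction. Your stated bound $T(r,A_m)\le\exp_{p}\{\phi(r)\}^{\beta}$ would only yield $\rho_{p+1}(A_m,\phi)\le\beta$ and would not suffice. (Cleanest fix: invoke Lemma~\ref{L5a} directly, which already outputs the proximity bound at the $\exp_{p-1}$ scale.)
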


\begin{proof}
Let $f$ be a meromorphic solution of (\ref{1h}) and put $c_{0}=0$. We divide
(\ref{1h}) by $f(z+c_{m})~$and we get%
\begin{equation}
-A_{m}(z)=\sum_{j=0,j\neq m}^{k}A_{j}(z)\frac{f(z+c_{j})}{f(z+c_{m})}.
\label{1.1}
\end{equation}

Now from Lemma \ref{L3} for any $\varepsilon >0$ we get,%
\begin{equation*}
m\left( r,\frac{f(z+c_{j})}{f(z+c_{m})}\right) =o\left( \frac{\left\{
T(r+3C,f)\right\} ^{1+\varepsilon }}{r^{\delta }}\right) ~,j=0,1,\ldots
,k,~j\neq m,
\end{equation*}%
for$~r\notin F_{1},$ where $lm(F_{1})<+\infty $ and $C=\max \{\left\vert
c_{j}\right\vert :j=0,1,\ldots ,k\}.$

Using the above result, from (\ref{1.1}) we get$,$ 
\begin{eqnarray}
&&T(r,A_{m})  \notag \\
&=&m(r,A_{m})+N(r,A_{m})  \notag \\
&\leq &\sum_{j=0,j\neq m}^{k}m(r,A_{j})+\sum_{j=0,j\neq m}^{k}m\left( r,%
\frac{f(z+c_{j})}{f(z+c_{m})}\right) +N(r,A_{m})+O(1)  \label{1.1a} \\
&\leq &\sum_{j=0,j\neq m}^{k}T(r,A_{j})+o\left( \frac{\left\{
T(r+3C,f)\right\} ^{1+\varepsilon }}{r^{\delta }}\right) +N(r,A_{m})+O(1) 
\notag \\
&\leq &\sum_{j=0,j\neq m}^{k}T(r,A_{j})+\left\{ T(2r,f)\right\}
^{2}+N(r,A_{m})+O(1)  \label{1.2}
\end{eqnarray}%
for $r\notin F_{1}.$

We denote,$~\rho =\rho _{p}\left( A_{m},\phi \right) ,~\rho _{1}=\max \{\rho
_{p}\left( A_{j},\phi \right) :j=0,1,\ldots ,k;~~j\neq m\}$ and $\lambda
=\lambda _{p}\left( \frac{1}{A_{m}},\phi \right) .~$

By Lemma \ref{L4a}, for that $\varepsilon $ there exists a set $F_{2}\subset
(1,+\infty )$ with $lm(F_{2})=+\infty $ such that%
\begin{equation}
T(r,A_{m})>\exp _{p-1}\left\{ \phi (r)\right\} ^{\rho -\varepsilon }
\label{1.3}
\end{equation}%
for $r\in F_{2}$. Also for $~j\neq m,~$for that $\varepsilon ~$we have%
\begin{equation}
T(r,A_{j})\leq \exp _{p-1}\left\{ \phi (r)\right\} ^{\rho _{1}+\varepsilon }
\label{1.4}
\end{equation}%
for sufficiently large $r.$

Again by the definition of $\lambda _{p}\left( \frac{1}{A_{m}},\phi \right)
, $ we have for that $\varepsilon $ and for sufficiently large $r$%
\begin{equation}
N(r,A_{m})\leq \exp _{p-1}\left\{ \phi (r)\right\} ^{\lambda +\varepsilon }.
\label{1.5}
\end{equation}

Now, using the relations (\ref{1.3})-(\ref{1.5}) and choosing $\varepsilon $
such that $0<\varepsilon <\frac{1}{2}\min \left\{ \rho -\rho _{1},\rho
-\lambda \right\} ,$ we have from (\ref{1.2})%
\begin{gather*}
\exp _{p-1}\left\{ \phi (r)\right\} ^{\rho -\varepsilon }\leq k.\exp
_{p-1}\left\{ \phi (r)\right\} ^{\rho _{1}+\varepsilon }+\left\{
T(2r,f)\right\} ^{2} \\
+\exp _{p-1}\left\{ \phi (r)\right\} ^{\lambda +\varepsilon }+O(1) \\
\Rightarrow \left\{ T(2r,f)\right\} ^{2}\geq O\left( \exp _{p-1}\left\{ \phi
(r)\right\} ^{\rho -\varepsilon }\right)
\end{gather*}%
for sufficiently large $r$ and $r\in F_{2}\backslash F_{1}.$

Which implies, $i_{\rho }(f,\phi )\geq p~$and $\rho _{p}\left( f,\phi
\right) \geq \rho =\rho _{p}\left( A_{m},\phi \right) .$
\end{proof}

\begin{theorem}
\label{T2}Let $A_{j}(z)(j=0,1,\ldots ,k)$ be meromorphic functions and $%
p=\max \{i_{\rho }(A_{j},\phi ):j=0,1,\ldots ,n-1\}$. \bigskip If there
exits an $A_{m}(z)(0\leq m\leq k)$ such that 
\begin{eqnarray*}
\lambda _{p}\left( \frac{1}{A_{m}},\phi \right) &<&\mu _{p}\left( A_{m},\phi
\right) <+\infty , \\
\text{and }\max \{\rho _{p}\left( A_{j},\phi \right) &:&j=0,1,\ldots
,k,~j\neq m\}<\mu _{p}\left( A_{m},\phi \right)
\end{eqnarray*}%
then every solution $f(z)~(f\neq 0)$ of (\ref{1h}) satisfies $i_{\rho
}(f,\phi )\geq p~$and $\mu _{p}\left( f,\phi \right) \geq \mu _{p}\left(
A_{m},\phi \right) .$
\end{theorem}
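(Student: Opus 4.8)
The plan is to reproduce the proof of Theorem \ref{T1} almost verbatim; the one substantive change is dictated by the conclusion now being about the iterated lower $p-\phi$ order, i.e.\ a $\liminf$, so every estimate that feeds the final inequality must hold for \emph{all} large $r$, not merely on a set of infinite logarithmic measure. So let $f\ (\not\equiv 0)$ solve (\ref{1h}); note $f$ is necessarily nonconstant, since otherwise (\ref{1h}) would force $A_m=-\sum_{j\neq m}A_j$, contradicting that $A_m$ has strictly larger iterated $p-\phi$ order than every $A_j$ with $j\neq m$. Putting $c_0=0$, dividing (\ref{1h}) by $f(z+c_m)$, rearranging as in (\ref{1.1}) and applying Lemma \ref{L3} to the quotients $f(z+c_j)/f(z+c_m)$ leads, exactly as at (\ref{1.2}), to
\begin{equation*}
T(r,A_m)\le\sum_{j=0,\,j\neq m}^{k}T(r,A_j)+\{T(2r,f)\}^{2}+N(r,A_m)+O(1),\qquad r\notin F_1,
\end{equation*}
for some $F_1\subset(1,+\infty)$ with $lm(F_1)<+\infty$.

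Write $\mu=\mu_p(A_m,\phi)$, $\rho_1=\max\{\rho_p(A_j,\phi):j\neq m\}$ and $\lambda=\lambda_p(1/A_m,\phi)$; by hypothesis $\rho_1<\mu$ and $\lambda<\mu<+\infty$, and since $\lambda\ge0$ also $\mu>0$. In contrast to Theorem \ref{T1}, I would \emph{not} call on Lemma \ref{L4a}: directly from the $\liminf$ in the definition of $\mu_p(A_m,\phi)$, for every $\varepsilon>0$ one has $T(r,A_m)>\exp_{p-1}\{\phi(r)\}^{\mu-\varepsilon}$ for all large $r$, with no exceptional set. Combining this with the routine bounds $T(r,A_j)\le\exp_{p-1}\{\phi(r)\}^{\rho_1+\varepsilon}$ $(j\neq m)$ and $N(r,A_m)\le\exp_{p-1}\{\phi(r)\}^{\lambda+\varepsilon}$ valid for all large $r$, and choosing $0<\varepsilon<\tfrac12\min\{\mu-\rho_1,\mu-\lambda\}$, the inequality above gives, for some $c>0$,
\begin{equation*}
\{T(2r,f)\}^{2}\ge c\,\exp_{p-1}\{\phi(r)\}^{\mu-\varepsilon},\qquad r\notin F_1,\ r\text{ large}.
\end{equation*}

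The step with no analogue in Theorem \ref{T1} is the removal of $F_1$. Since $lm(F_1)<+\infty$, for every large $R$ the interval $[R,2R]$ is not contained in $F_1$, so choose $r=r(R)\in[R,2R]\setminus F_1$; using that $T(\cdot,f)$ and $\phi$ are both nondecreasing,
\begin{equation*}
\{T(4R,f)\}^{2}\ge\{T(2r,f)\}^{2}\ge c\,\exp_{p-1}\{\phi(r)\}^{\mu-\varepsilon}\ge c\,\exp_{p-1}\{\phi(R)\}^{\mu-\varepsilon}
\end{equation*}
for all large $R$. Replacing $R$ by $R/4$ and using condition $(ii)$ on $\phi$ (so that $\log\phi(R/4)=(1+o(1))\log\phi(R)$, whence $\{\phi(R/4)\}^{\mu-\varepsilon}\ge\{\phi(R)\}^{\mu-2\varepsilon}$ for large $R$) gives $\{T(R,f)\}^{2}\ge c\,\exp_{p-1}\{\phi(R)\}^{\mu-2\varepsilon}$ for all large $R$. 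Taking logarithms $p-1$ times as at the end of Theorem \ref{T1} (absorbing $c$ and the exponent $2$, and using condition $(ii)$ once more) now shows $\rho_{p-1}(f,\phi)=+\infty$, hence $i_\rho(f,\phi)\ge p$, together with $\liminf_{R\to\infty}\log_p T(R,f)/\log\phi(R)\ge\mu-2\varepsilon$; letting $\varepsilon\downarrow0$ yields $\mu_p(f,\phi)\ge\mu=\mu_p(A_m,\phi)$.

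I expect the removal of the exceptional set to be the only real obstacle. In Theorem \ref{T1} the target is a $\limsup$, so it suffices that the key estimate hold along one sequence, and a set $F_1$ of finite logarithmic measure — though possibly unbounded — is automatically harmless; for the $\liminf$ statement here one genuinely has to upgrade ``for $r\notin F_1$'' to ``for all large $r$'', and the monotonicity of $T$ together with the growth condition $(ii)$ on $\phi$ is exactly what achieves this. Apart from that step, the argument is a transcription of the proof of Theorem \ref{T1} with $\mu_p$ in place of $\rho_p$.
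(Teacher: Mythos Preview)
Your argument is correct and follows the paper's own proof essentially line for line: derive the basic inequality \eqref{1.2} for $r\notin F_1$, replace the Lemma~\ref{L4a} step by the direct $\liminf$ lower bound $T(r,A_m)>\exp_{p-1}\{\phi(r)\}^{\mu-\varepsilon}$ valid for all large $r$, and combine. The only place you depart from the paper is that you actually justify the passage from ``for $r\notin F_1$'' to ``for all large $r$'' via the monotonicity of $T(\cdot,f)$, the pigeonhole on $[R,2R]$, and condition~(ii) on $\phi$; the paper simply writes the final inequality for $r\notin F_1$ and then asserts $\mu_p(f,\phi)\ge\mu$ without further comment. So your extra paragraph is not a different method but a rigorous filling-in of a step the paper leaves implicit --- and, as you correctly observe, this step genuinely matters for a $\liminf$ conclusion in a way it did not for Theorem~\ref{T1}.
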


\begin{proof}
Let $\varepsilon >0$\ be any arbitrary constant. As in the previous theorem,
from (\ref{1.2}), we get, 
\begin{equation}
T(r,A_{m})\leq \sum_{j=0,j\neq m}^{k}T(r,A_{j})+\left\{ T(2r,f)\right\}
^{2}+N(r,A_{m})+O(1)  \label{1a.1}
\end{equation}%
for $r\notin F_{1},$ where $lm(F_{1})<+\infty .$

We denote,$~\mu =\mu _{p}\left( A_{m},\phi \right) ,~\rho _{1}=\max \{\rho
_{p}\left( A_{j},\phi \right) :j=0,1,\ldots ,k;~~j\neq m\}$ and $\lambda
=\lambda _{p}\left( \frac{1}{A_{m}},\phi \right) .~$

Then for that $\varepsilon $ and, we have%
\begin{equation}
T(r,A_{m})>\exp _{p-1}\left\{ \phi (r)\right\} ^{\mu -\varepsilon }
\label{1a.2}
\end{equation}%
for sufficiently large $r$. Also for $~j\neq m,~$for that $\varepsilon ~$we
have%
\begin{equation}
T(r,A_{j})\leq \exp _{p-1}\left\{ \phi (r)\right\} ^{\rho _{1}+\varepsilon }
\label{1a.3}
\end{equation}%
for sufficiently large $r.$

Again by the definition of $\lambda _{p}\left( \frac{1}{A_{m}},\phi \right)
, $ we have for that $\varepsilon $ and for sufficiently large $r$%
\begin{equation}
N(r,A_{m})\leq \exp _{p-1}\left\{ \phi (r)\right\} ^{\lambda +\varepsilon }.
\label{1a.4}
\end{equation}

Now, using the relations (\ref{1a.2})-(\ref{1a.4}) and choosing $\varepsilon 
$ such that $0<\varepsilon <\frac{1}{2}\min \left\{ \mu -\rho _{1},\mu
-\lambda \right\} ,$ we have from (\ref{1a.1})%
\begin{gather*}
\exp _{p-1}\left\{ \phi (r)\right\} ^{\mu -\varepsilon }\leq k.\exp
_{p-1}\left\{ \phi (r)\right\} ^{\rho _{1}+\varepsilon }+\left\{
T(2r,f)\right\} ^{2} \\
+\exp _{p-1}\left\{ \phi (r)\right\} ^{\lambda +\varepsilon }+O(1) \\
\Rightarrow \left\{ T(2r,f)\right\} ^{2}\geq O\left( \exp _{p-1}\left\{ \phi
(r)\right\} ^{\mu -\varepsilon }\right)
\end{gather*}%
for $r\notin F_{1}.$

Which implies, $i_{\rho }(f,\phi )\geq p~$and $\mu _{p}\left( f,\phi \right)
\geq \mu =\mu _{p}\left( A_{m},\phi \right) .$
\end{proof}

In Theorem \ref{T3} and Theorem \ref{T4} we consider nonhomogeneous linear
difference equation (\ref{1nh}) which may have more than one coefficient
with the maximal iterated $p-\phi $ order and iterated lower $p-\phi $ order
respectively. For these we need to consider the iterated $p-\phi $ type or
iterated lower $p-\phi $ type among the coefficients having maximal iterated 
$p-\phi $ order.

\begin{theorem}
\label{T3}Let $A_{j}(z)(j=0,1,\ldots ,k)$ and $A(z)$ be meromorphic
functions and $p=\max \{i_{\rho }(A_{j},\phi ):j=0,1,\ldots ,n-1\}$. If
there exits an $A_{m}(z)(0\leq m\leq k)$ such that 
\begin{eqnarray*}
\lambda _{p}\left( \frac{1}{A_{m}},\phi \right) &<&\rho _{p}\left(
A_{m},\phi \right) <\infty , \\
\max \{\rho _{p}\left( A_{j},\phi \right) &:&j=0,1,\ldots ,k,~j\neq m\}\leq
\rho _{p}\left( A_{m},\phi \right) \\
\text{and }\max \{\tau _{p}\left( A_{j},\phi \right) &:&\rho _{p}\left(
A_{j},\phi \right) =\rho _{p}\left( A_{m},\phi \right) ,~j=0,1,\ldots
,k,~j\neq m\}<\tau _{p}\left( A_{m},\phi \right) <\infty ,
\end{eqnarray*}%
then the following cases can be arise.

i)\qquad If $\rho _{p}\left( A,\phi \right) <\rho _{p}\left( A_{m},\phi
\right) ,$ or$~\rho _{p}\left( A,\phi \right) =\rho _{p}\left( A_{m},\phi
\right) $ and $\tau _{p}\left( A,\phi \right) \neq \tau _{p}\left(
A_{m},\phi \right) ,$ then every solution $f(z)~(f\neq 0)$ of (\ref{1nh})
satisfies $i_{\rho }(f,\phi )\geq p~$and $\rho _{p}\left( f,\phi \right)
\geq \rho _{p}\left( A_{m},\phi \right) .$

ii)\qquad If $\rho _{p}\left( A,\phi \right) >\rho _{p}\left( A_{m},\phi
\right) $ then every solution $f(z)~(f\neq 0)$ of (\ref{1nh}) satisfies $%
i_{\rho }(f,\phi )\geq p~$and $\rho _{p}\left( f,\phi \right) \geq \rho
_{p}\left( A,\phi \right) .$
\end{theorem}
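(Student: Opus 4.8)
The plan is to adapt the argument of Theorem~\ref{T1}, now running two estimates in tandem: one obtained by dividing (\ref{1nh}) by $f(z+c_{m})$, the other by reading (\ref{1nh}) as an expression for $A(z)$.

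Put $c_{0}=0$ and $C=\max\{|c_{j}|:0\le j\le k\}$. Dividing (\ref{1nh}) by $f(z+c_{m})$ gives $-A_{m}(z)=\sum_{j\neq m}A_{j}(z)\frac{f(z+c_{j})}{f(z+c_{m})}-\frac{A(z)}{f(z+c_{m})}$. Bounding $m(r,A_{m})$ by the right side via subadditivity of the proximity function, using $m(r,A_{j})\le T(r,A_{j})$, $m(r,A)\le T(r,A)$, $m\bigl(r,\tfrac{1}{f(z+c_{m})}\bigr)\le T(r,f(z+c_{m}))+O(1)\le(1+o(1))T(r+C,f)$ (Lemma~\ref{L2}), and Lemma~\ref{L3} for the shift ratios, and then adding $N(r,A_{m})$, one gets a set $F_{1}\subset(1,+\infty)$ with $lm(F_{1})<\infty$ such that
\[T(r,A_{m})\le\sum_{j\neq m}T(r,A_{j})+T(r,A)+\{T(2r,f)\}^{1+\varepsilon}+N(r,A_{m})+O(1),\qquad r\notin F_{1}.\qquad(\mathrm{I})\]
On the other hand, from $A(z)=\sum_{j=0}^{k}A_{j}(z)f(z+c_{j})$, subadditivity of the Nevanlinna characteristic for sums and products together with Lemma~\ref{L2} gives, for all large $r$,
\[T(r,A)\le\sum_{j=0}^{k}T(r,A_{j})+(k+1)(1+o(1))\,T(r+C,f)+O(1).\qquad(\mathrm{II})\]

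Write $\rho=\rho_{p}(A_{m},\phi)$ (so $\rho>\lambda_{p}(1/A_{m},\phi)\ge0$, whence $\rho\in(0,\infty)$), $\tau=\tau_{p}(A_{m},\phi)$, $\lambda=\lambda_{p}(1/A_{m},\phi)<\rho$, and $\tau_{1}=\max\{\tau_{p}(A_{j},\phi):\rho_{p}(A_{j},\phi)=\rho,\ j\neq m\}<\tau$ (read as $0$ if this set is empty). For $j\neq m$, treating separately the cases $\rho_{p}(A_{j},\phi)=\rho$ and $\rho_{p}(A_{j},\phi)<\rho$ and invoking the Remark following the definition of iterated $p-\phi$ type, one has $T(r,A_{j})\le\exp_{p-1}\bigl[(\tau_{1}+\varepsilon)\{\phi(r)\}^{\rho}\bigr]$ for $r$ large; likewise, since $\lambda<\rho$, $N(r,A_{m})\le\exp_{p-1}\{\phi(r)\}^{\lambda+\varepsilon}\le\exp_{p-1}\bigl[(\tau_{1}+\varepsilon)\{\phi(r)\}^{\rho}\bigr]$ for $r$ large. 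Finally Lemma~\ref{L4b} gives a set $F_{2}$ with $lm(F_{2})=+\infty$ on which $T(r,A_{m})>\exp_{p-1}\bigl[(\tau-\varepsilon)\{\phi(r)\}^{\rho}\bigr]$.

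For part~(i) I would distinguish two sub-cases. If $\rho_{p}(A,\phi)<\rho$, or $\rho_{p}(A,\phi)=\rho$ with $\tau_{p}(A,\phi)<\tau$, then $T(r,A)\le\exp_{p-1}\bigl[(\tau'+\varepsilon)\{\phi(r)\}^{\rho}\bigr]$ for $r$ large for some $\tau'<\tau$; substituting all the above bounds into $(\mathrm{I})$ and choosing $\varepsilon>0$ so small that $\max(\tau_{1},\tau')+\varepsilon<\tau-\varepsilon$ forces $\{T(2r,f)\}^{1+\varepsilon}\ge\tfrac12\exp_{p-1}\bigl[(\tau-\varepsilon)\{\phi(r)\}^{\rho}\bigr]$ for $r\in F_{2}\setminus F_{1}$ large, and then iterated logarithms together with property~(ii) of $\phi$ (which makes the dilation $r\mapsto2r$ harmless) yield $i_{\rho}(f,\phi)\ge p$ and $\rho_{p}(f,\phi)\ge\rho=\rho_{p}(A_{m},\phi)$. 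If $\rho_{p}(A,\phi)=\rho$ and $\tau_{A}:=\tau_{p}(A,\phi)>\tau$, I would argue by contradiction: if $\rho_{p}(f,\phi)<\rho$ then $T(r,f)\le\exp_{p-1}\{\phi(r)\}^{\rho-\varepsilon}$ for $r$ large, so the right side of $(\mathrm{II})$ is $\le\exp_{p-1}\bigl[(\tau+2\varepsilon)\{\phi(r)\}^{\rho}\bigr]$ for $r$ large, whereas Lemma~\ref{L4b} applied to $A$ gives $T(r,A)>\exp_{p-1}\bigl[(\tau_{A}-\varepsilon)\{\phi(r)\}^{\rho}\bigr]$ on a set of infinite logarithmic measure; for $\varepsilon$ small this is a contradiction, so again $i_{\rho}(f,\phi)\ge p$ and $\rho_{p}(f,\phi)\ge\rho$.

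For part~(ii), where $\rho_{p}(A,\phi)>\rho$, I would use $(\mathrm{II})$ directly: since every $\rho_{p}(A_{j},\phi)\le\rho$, $\sum_{j=0}^{k}T(r,A_{j})\le(k+1)\exp_{p-1}\{\phi(r)\}^{\rho+\varepsilon}$ for $r$ large, while along a sequence $r_{n}\to\infty$ realizing $\rho_{p}(A,\phi)$ one has $T(r_{n},A)>\exp_{p-1}\{\phi(r_{n})\}^{\rho_{p}(A,\phi)-\varepsilon}$ (or larger than any prescribed power if $\rho_{p}(A,\phi)=\infty$); choosing $\varepsilon$ with $\rho+\varepsilon<\rho_{p}(A,\phi)-\varepsilon$ makes the coefficient sum $o(T(r_{n},A))$, so $(1+o(1))T(r_{n}+C,f)\ge\tfrac12 T(r_{n},A)$, and the same iterated-logarithm passage gives $i_{\rho}(f,\phi)\ge p$ and $\rho_{p}(f,\phi)\ge\rho_{p}(A,\phi)$. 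The step I expect to be the main obstacle — the genuinely new point over Theorem~\ref{T1} — is precisely this case analysis for the nonhomogeneous term: one must recognise whether $A$ competes with $A_{m}$ at the level of orders or of types, and correspondingly exploit $(\mathrm{I})$ (dividing by $f(z+c_{m})$) or $(\mathrm{II})$ (the equation itself), the hypothesis $\tau_{p}(A,\phi)\neq\tau_{p}(A_{m},\phi)$ being exactly what guarantees that one of these two routes succeeds. All the rest — the type comparisons among the $A_{j}$, the iterated-logarithm bookkeeping, and the handling of the exceptional sets of finite versus infinite logarithmic measure — is routine and runs parallel to Lemmas~\ref{L4a}--\ref{L5c}.
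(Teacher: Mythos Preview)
Your proposal is correct and follows essentially the same route as the paper: the same two master inequalities (your $(\mathrm{I})$ and $(\mathrm{II})$ are the paper's (\ref{2.2}) and (\ref{2.11})), the same invocation of Lemma~\ref{L4b} for the lower bound on $T(r,A_{m})$ (or on $T(r,A)$), and the same case split according to how $\rho_{p}(A,\phi)$ and $\tau_{p}(A,\phi)$ compare with $\rho$ and $\tau$. The only cosmetic differences are that you merge the paper's Cases~1 and~2 of part~(i) into a single sub-case, and in the sub-case $\tau_{p}(A,\phi)>\tau$ you phrase the argument as a contradiction (assuming $\rho_{p}(f,\phi)<\rho$) whereas the paper bounds $T(r,A_{m})$ above via the definition of type and reads off a direct lower bound for $T(2r,f)$ from $(\mathrm{II})$; these are the same computation rearranged.
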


\begin{proof}
Let $f$ be a meromorphic solution of (\ref{1nh}) and put $c_{0}=0$. We
divide (\ref{1nh}) by $f(z+c_{m})~$and we get%
\begin{equation}
-A_{m}(z)=\sum_{j=0,j\neq m}^{k}A_{j}(z)\frac{f(z+c_{j})}{f(z+c_{m})}-\frac{%
A(z)}{f(z+c_{m})}.  \label{2.1}
\end{equation}

Now from Lemma \ref{L3} and Lemma \ref{L2}, for any $\varepsilon >0$ we get,%
\begin{eqnarray*}
m\left( r,\frac{f(z+c_{j})}{f(z+c_{m})}\right) &\leq &o\left( \frac{\left\{
T(r+3C,f)\right\} ^{1+\varepsilon }}{r^{\delta }}\right) ~,j=0,1,\ldots
,k,~j\neq m, \\
\text{and }m\left( r,\frac{1}{f(z+c_{m})}\right) &\leq &T\left( r,\frac{1}{%
f(z+c_{m})}\right) \\
&=&T\left( r,f(z+c_{m})\right) +O(1) \\
&\leq &(1+O(1))T\left( r+C,f\right) ,
\end{eqnarray*}%
for$~r\notin F_{1},$ where $lm(F_{1})<+\infty $ and $C=\max \{\left\vert
c_{j}\right\vert :j=0,1,\ldots ,k\}$.

Using above results in (\ref{2.1}) we get, 
\begin{eqnarray}
&&T(r,A_{m})  \notag \\
&=&m(r,A_{m})+N(r,A_{m})  \notag \\
&\leq &\sum_{j=0,j\neq m}^{k}m(r,A_{j})+\sum_{j=0,j\neq m}^{k}m\left( r,%
\frac{f(z+c_{j})}{f(z+c_{m})}\right) +m(r,A)  \notag \\
&&+m\left( r,\frac{1}{f(z+c_{m})}\right) +N(r,A_{m})+O(1)  \notag \\
&\leq &\sum_{j=0,j\neq m}^{k}T(r,A_{j})+o\left( \frac{\left\{
T(r+3C,f)\right\} ^{1+\varepsilon }}{r^{\delta }}\right) +T(r,A)  \notag \\
&&+(1+O(1))T\left( r+C,f\right) +N(r,A_{m})+O(1)  \notag \\
&\leq &\sum_{j=0,j\neq m}^{k}T(r,A_{j})+3\left\{ T(2r,f)\right\}
^{2}+T(r,A)+N(r,A_{m})+O(1)  \label{2.2}
\end{eqnarray}%
for sufficiently large $r$ and $r\notin F_{1}.$

Now denote,$~\rho =\rho _{p}\left( A_{m},\phi \right) ,~\rho _{1}=\max
\{\rho _{p}\left( A_{j},\phi \right) :j=0,1,\ldots ,k,~\rho _{p}\left(
A_{j},\phi \right) <\rho \},~\tau =\tau _{p}\left( A_{m},\phi \right) ,~\tau
_{1}=\max \{\tau _{p}\left( A_{j},\phi \right) :j=0,1,\ldots ,k,~j\neq
m,~\rho _{p}\left( A_{j},\phi \right) =\rho \}$ and $\lambda =\lambda
_{p}\left( \frac{1}{A_{m}},\phi \right) .$

Now from lemma \ref{L4b} for that $\varepsilon ~$there exists a set $%
F_{2}\subset (1,+\infty )$ with $lm(F)=+\infty $ such that%
\begin{equation}
T(r,A_{m})>\exp _{p-1}\left[ (\tau -\varepsilon )\left\{ \phi (r)\right\}
^{\rho }\right]  \label{2.3}
\end{equation}%
for sufficiently large $r$ and $r\in F_{2}$.

Again if $\rho _{p}\left( A_{j},\phi \right) <\rho ,$ then for that $%
\varepsilon ~$we have%
\begin{equation}
T(r,A_{j})\leq \exp _{p-1}\left[ \left\{ \phi (r)\right\} ^{\rho
_{1}+\varepsilon }\right]  \label{2.4}
\end{equation}%
for sufficiently large $r.$

If$~\rho _{p}\left( A_{j},\phi \right) =\rho ,~$ $j\neq m,~$then for that $%
\varepsilon ~$we have%
\begin{equation}
T(r,A_{j})\leq \exp _{p-1}\left[ (\tau _{1}+\varepsilon )\left\{ \phi
(r)\right\} ^{\rho }\right]  \label{2.5}
\end{equation}%
for sufficiently large $r.$

Also by the definition of $\lambda _{p}\left( \frac{1}{A_{m}},\phi \right) ,$
we have for the above $\varepsilon $ and for sufficiently large $r$%
\begin{equation}
N(r,A_{m})\leq \exp _{p-1}\left\{ \phi (r)\right\} ^{\lambda +\varepsilon }.
\label{2.6}
\end{equation}

(i) Case 1: For the first part of the proof, we take $\rho _{p}\left( A,\phi
\right) <\rho ,~$then for that $\varepsilon ~$we have%
\begin{equation}
T(r,A)\leq \exp _{p-1}\left[ \left\{ \phi (r)\right\} ^{\rho _{p}\left(
A,\phi \right) +\varepsilon }\right]  \label{2.7}
\end{equation}%
for sufficiently large $r.$

Now, using the relations (\ref{2.3})-(\ref{2.7}) and choosing $\varepsilon $
such that $0<\varepsilon <\frac{1}{2}\min \left\{ \rho -\rho _{1},\tau -\tau
_{1},\rho -\lambda ,\rho -\rho _{p}\left( A,\phi \right) \right\} ,$ we have
from (\ref{2.2})%
\begin{gather*}
\exp _{p-1}\left[ (\tau -\varepsilon )\left\{ \phi (r)\right\} ^{\rho }%
\right] \leq O\left( \exp _{p-1}\left[ \left\{ \phi (r)\right\} ^{\rho
_{1}+\varepsilon }\right] \right) +O\left( \exp _{p-1}\left[ (\tau
_{1}+\varepsilon )\left\{ \phi (r)\right\} ^{\rho }\right] \right) \\
+3\left\{ T(2r,f)\right\} ^{2}+\exp _{p-1}\left[ \left\{ \phi (r)\right\}
^{\rho _{p}\left( A,\phi \right) +\varepsilon }\right] +\exp _{p-1}\left[
\left\{ \phi (r)\right\} ^{\lambda +\varepsilon }\right] +O(1) \\
\Rightarrow \left\{ T(2r,f)\right\} ^{2}\geq O\left( \exp _{p-1}\left[
\left\{ \phi (r)\right\} ^{\rho +\varepsilon }\right] \right) ,
\end{gather*}%
for sufficiently large $r$ and $r\in F_{2}\backslash F_{1}.$

Which implies, $i_{\rho }(f,\phi )\geq p~$and $\rho _{p}\left( f,\phi
\right) \geq \rho =\rho _{p}\left( A_{m},\phi \right) .$

Case 2: Next we suppose that $\rho _{p}\left( A,\phi \right) =\rho $ and $%
\tau _{p}\left( A,\phi \right) <\tau ,~$then for the above $\varepsilon ~$we
have%
\begin{equation}
T(r,A)\leq \exp _{p-1}\left[ \left\{ \tau _{p}\left( A,\phi \right)
+\varepsilon \right\} \left\{ \phi (r)\right\} ^{\rho }\right]  \label{2.8}
\end{equation}%
for sufficiently large $r.$

Now, using relations (\ref{2.3})-(\ref{2.6}) and (\ref{2.8}) and choosing $%
\varepsilon $ such that $0<\varepsilon <\frac{1}{2}\min \left\{ \rho -\rho
_{1},\tau -\tau _{1},\rho -\lambda ,\tau -\tau _{p}\left( A,\phi \right)
\right\} ,$ we have from (\ref{2.2})%
\begin{gather*}
\exp _{p-1}\left[ (\tau -\varepsilon )\left\{ \phi (r)\right\} ^{\rho }%
\right] \leq O\left( \exp _{p-1}\left[ \left\{ \phi (r)\right\} ^{\rho
_{1}+\varepsilon }\right] \right) +O\exp _{p-1}\left[ (\tau _{1}+\varepsilon
)\left\{ \phi (r)\right\} ^{\rho }\right] \\
+3\left\{ T(2r,f)\right\} ^{2}+\exp _{p-1}\left[ \left\{ \tau _{p}\left(
A,\phi \right) +\varepsilon \right\} \left\{ \phi (r)\right\} ^{\rho }\right]
+\exp _{p-1}\left[ \left\{ \phi (r)\right\} ^{\lambda +\varepsilon }\right]
+O(1) \\
\Rightarrow \left\{ T(2r,f)\right\} ^{2}>O\left( \exp _{p-1}\left[ \left\{
\phi (r)\right\} ^{\rho +\varepsilon }\right] \right)
\end{gather*}%
for sufficiently large $r$ and $r\in F_{2}\backslash F_{1}.$

Which implies, $i_{\rho }(f,\phi )\geq p~$and $\rho _{p}\left( f,\phi
\right) \geq \rho =\rho _{p}\left( A_{m},\phi \right) .$

Case 3: Next we take, $\rho _{p}\left( A,\phi \right) =\rho $ and $\tau
_{p}\left( A,\phi \right) >\tau ,$ then by Lemma \ref{L4b}, and for the
above $\varepsilon ~$we have%
\begin{equation}
T(r,A)>\exp _{p-1}\left[ \left\{ \tau _{p}\left( A,\phi \right) -\varepsilon
\right\} \left\{ \phi (r)\right\} ^{\rho }\right]  \label{2.9}
\end{equation}%
for $r\in F_{3},$ where $lm(F_{3})=+\infty $.

Again by the definition of $\tau _{p}\left( A_{m},\phi \right) $ $,$ we have
for the above $\varepsilon $ and for sufficiently large $r$%
\begin{equation}
T(r,A_{m})\leq \exp _{p-1}\left[ (\tau +\varepsilon )\left\{ \phi
(r)\right\} ^{\rho }\right] .  \label{2.10}
\end{equation}

Now from (\ref{1nh}) and using Lemma \ref{L2} we get%
\begin{gather}
A(z)=\sum_{j=0}^{k}A_{j}(z)f(z+c_{j})  \notag \\
\Rightarrow T(r,A)\leq \sum_{j=0,j\neq
m}^{k}T(r,A_{j})+T(r,A_{m})+\sum_{j=0}^{k}T(r,f(z+c_{j}))+O(1)  \notag \\
\Rightarrow T(r,A)\leq \sum_{j=0,j\neq
m}^{k}T(r,A_{j})+T(r,A_{m})+(k+1)(1+o(1))T(r+C,f)+O(1)  \notag \\
\Rightarrow T(r,A)\leq \sum_{j=0,j\neq
m}^{k}T(r,A_{j})+T(r,A_{m})+2(k+1)T(2r,f)+O(1)  \label{2.11}
\end{gather}%
for sufficiently large $r$

Now, using relations (\ref{2.4}), (\ref{2.5}), (\ref{2.9}), (\ref{2.10}) and
choosing $\varepsilon $ such that $0<\varepsilon <\frac{1}{2}\min \left\{
\rho -\rho _{1},\tau -\tau _{1},\tau _{p}\left( A,\phi \right) -\tau
\right\} ,$ we have from (\ref{2.11}) 
\begin{gather*}
\exp _{p-1}\left[ (\tau _{p}\left( A,\phi \right) -\varepsilon )(\phi
(r))^{\rho }\right] \leq O\left[ \exp _{p-1}\left[ \left\{ \phi (r)\right\}
^{\rho _{1}+\varepsilon }\right] \right] \\
+O\left( \exp _{p-1}\left[ (\tau _{1}+\varepsilon )\left\{ \phi (r)\right\}
^{\rho }\right] \right) +\exp _{p-1}\left[ (\tau +\varepsilon )\left\{ \phi
(r)\right\} ^{\rho }\right] +2(k+1)T(2r,f)+O(1) \\
\Rightarrow T(2r,f)\geq O\left( \exp _{p-1}\left[ \left\{ \phi (r)\right\}
^{\rho +\varepsilon }\right] \right)
\end{gather*}%
for sufficiently large $r$ and $r\in F_{3}.$

It follows that, $i_{\rho }(f,\phi )\geq p~$and $\rho _{p}\left( f,\phi
\right) \geq \rho =\rho _{p}\left( A_{m},\phi \right) .$

(ii) For the second part of the theorem, we take $\rho _{p}\left( A,\phi
\right) >\rho $. Then from lemma \ref{L4a} for the above $\varepsilon ~$we
have%
\begin{equation}
T(r,A)\geq \exp _{p-1}\left[ \left\{ \phi (r)\right\} ^{\rho _{p}\left(
A,\phi \right) -\varepsilon }\right]  \label{2.12}
\end{equation}%
for sufficiently large $r$ with$~r\in F_{4},$ where $lm(F_{4})=+\infty $.

Again for the above $\varepsilon ~$we have%
\begin{equation}
T(r,A_{m})\leq \exp _{p-1}\left[ \left\{ \phi (r)\right\} ^{\rho
+\varepsilon }\right] ,  \label{2.13}
\end{equation}%
for sufficiently large $r$.

Now, using relations (\ref{2.4}), (\ref{2.5}), (\ref{2.12}), (\ref{2.13})
and choosing $\varepsilon $ such that $0<\varepsilon <\frac{1}{2}\min
\left\{ \rho -\rho _{1},\tau -\tau _{1},\rho _{p}\left( A,\phi \right) -\rho
\right\} ,$ we have from (\ref{2.11}) 
\begin{gather*}
\exp _{p-1}\left[ \left\{ \phi (r)\right\} ^{\rho _{p}\left( A,\phi \right)
-\varepsilon }\right] \leq O\left[ \exp _{p-1}\left[ \left\{ \phi
(r)\right\} ^{\rho _{1}+\varepsilon }\right] \right] +O\left( \exp _{p-1}%
\left[ (\tau _{1}+\varepsilon )\left\{ \phi (r)\right\} ^{\rho }\right]
\right) \\
+\exp _{p-1}\left[ \left\{ \phi (r)\right\} ^{\rho +\varepsilon }\right]
+2(k+1)T(2r,f)+O(1) \\
\Rightarrow T(2r,f)\geq O\left( \exp _{p-1}\left[ \left\{ \phi (r)\right\}
^{\rho _{p}\left( A,\phi \right) -\varepsilon }\right] \right)
\end{gather*}%
for sufficiently large $r$ and $r\in F_{4}.$

Hence we have $i_{\rho }(f,\phi )\geq p~$and $\rho _{p}\left( f,\phi \right)
\geq \rho _{p}\left( A,\phi \right) .$
\end{proof}

\begin{theorem}
\label{T4}Let $A_{j}(z)(j=0,1,\ldots ,k)$ and $A(z)$ be meromorphic
functions and $p=\max \{i_{\rho }(A_{j},\phi ):j=0,1,\ldots ,n-1\}$. If
there exits an $A_{m}(z)(0\leq m\leq k)$ such that 
\begin{eqnarray*}
\lambda _{p}\left( \frac{1}{A_{m}},\phi \right) &<&\mu _{p}\left( A_{m},\phi
\right) <\infty , \\
\max \{\rho _{p}\left( A_{j},\phi \right) &:&j=0,1,\ldots ,k,~j\neq m\}\leq
\mu _{p}\left( A_{m},\phi \right) \\
\text{and }\max \{\tau _{p}\left( A_{j},\phi \right) &:&\rho _{p}\left(
A_{j},\phi \right) =\mu _{p}\left( A_{m},\phi \right) ,~j=0,1,\ldots
,k,~j\neq m\}<t_{p}\left( A_{m},\phi \right) <\infty ,
\end{eqnarray*}%
then the following cases can be arise.

i)\qquad If $\rho _{p}\left( A,\phi \right) <\mu _{p}\left( A_{m},\phi
\right) ,$ or$~\rho _{p}\left( A,\phi \right) =\mu _{p}\left( A_{m},\phi
\right) $ and $\tau _{p}\left( A,\phi \right) <t_{p}\left( A_{m},\phi
\right) ,$ or$~\mu _{p}\left( A,\phi \right) =\mu _{p}\left( A_{m},\phi
\right) $ and $t_{p}\left( A,\phi \right) >t_{p}\left( A_{m},\phi \right) ,$
then every solution $f(z)~(f\neq 0)$ of (\ref{1nh}) satisfies $i_{\rho
}(f,\phi )\geq p~$and $\mu _{p}\left( f,\phi \right) \geq \mu _{p}\left(
A_{m},\phi \right) .$

ii)\qquad If $\mu _{p}\left( A,\phi \right) >\mu _{p}\left( A_{m},\phi
\right) $ then every solution $f(z)~(f\neq 0)$ of (\ref{1nh}) satisfies $%
i_{\rho }(f,\phi )\geq p~$and $\mu _{p}\left( f,\phi \right) \geq \mu
_{p}\left( A,\phi \right) .$
\end{theorem}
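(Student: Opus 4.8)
The strategy is to mirror the proof of Theorem \ref{T3}, replacing suprema by infima wherever the maximal coefficient is controlled from below. First I would let $f$ be a nontrivial meromorphic solution of (\ref{1nh}), set $c_0=0$, divide by $f(z+c_m)$, and obtain the identity
\begin{equation*}
-A_m(z)=\sum_{j=0,\,j\neq m}^{k}A_j(z)\frac{f(z+c_j)}{f(z+c_m)}-\frac{A(z)}{f(z+c_m)}.
\end{equation*}
Applying Lemma \ref{L3} to the proximity functions $m\bigl(r,f(z+c_j)/f(z+c_m)\bigr)$ and Lemma \ref{L2} to bound $m\bigl(r,1/f(z+c_m)\bigr)$ by $(1+o(1))T(r+C,f)$, I would derive, exactly as in (\ref{2.2}), the estimate
\begin{equation*}
T(r,A_m)\leq \sum_{j=0,\,j\neq m}^{k}T(r,A_j)+3\{T(2r,f)\}^{2}+T(r,A)+N(r,A_m)+O(1)
\end{equation*}
for sufficiently large $r\notin F_1$ with $lm(F_1)<\infty$. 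The companion inequality obtained from (\ref{1nh}) directly, namely
\begin{equation*}
T(r,A)\leq \sum_{j=0,\,j\neq m}^{k}T(r,A_j)+T(r,A_m)+2(k+1)T(2r,f)+O(1),
\end{equation*}
will be needed for the cases where $A$ dominates.

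Next I would introduce the notation $\mu=\mu_p(A_m,\phi)$, $t=t_p(A_m,\phi)$, $\rho_1=\max\{\rho_p(A_j,\phi):\rho_p(A_j,\phi)<\mu\}$, $\tau_1=\max\{\tau_p(A_j,\phi):\rho_p(A_j,\phi)=\mu,\ j\neq m\}$, and $\lambda=\lambda_p(1/A_m,\phi)$. The crucial lower bound on the dominant coefficient comes from Lemma \ref{L4d}: for any $\varepsilon>0$ there is a set $F_2$ with $lm(F_2)=+\infty$ on which $\log_{p-1}T(r,A_m)>(t-\varepsilon)\{\phi(r)\}^{\mu}$. Against this I would use the routine upper bounds: $T(r,A_j)\leq\exp_{p-1}\{\phi(r)\}^{\rho_1+\varepsilon}$ for the low-order coefficients, $T(r,A_j)\leq\exp_{p-1}[(\tau_1+\varepsilon)\{\phi(r)\}^{\mu}]$ for those of order exactly $\mu$, and $N(r,A_m)\leq\exp_{p-1}\{\phi(r)\}^{\lambda+\varepsilon}$. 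In case (i), depending on which subcase holds, I bound $T(r,A)$ either by $\exp_{p-1}\{\phi(r)\}^{\rho_p(A,\phi)+\varepsilon}$ (when $\rho_p(A,\phi)<\mu$), or by $\exp_{p-1}[\{\tau_p(A,\phi)+\varepsilon\}\{\phi(r)\}^{\mu}]$ (when orders agree and $\tau_p(A,\phi)<t$), or — in the third subcase where $A$ itself grows at lower type $t_p(A,\phi)>t_p(A_m,\phi)$ — I invoke Lemma \ref{L4d} for $A_m$ to get an upper bound $T(r,A_m)\leq\exp_{p-1}[(t+\varepsilon)\{\phi(r)\}^{\mu}]$ on a set of infinite logarithmic measure and use the companion inequality together with a lower type bound for $A$. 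In every subcase, choosing $\varepsilon$ smaller than half the relevant gap makes all terms on the right except $\{T(2r,f)\}^2$ negligible compared to the left side on $F_2$ (or the appropriate infinite-measure set), forcing $\{T(2r,f)\}^{2}\geq O\bigl(\exp_{p-1}\{\phi(r)\}^{\mu}\bigr)$ there; since this holds on a set of infinite logarithmic measure and $\liminf$ over such a set is what defines $\mu_p(f,\phi)$, we conclude $i_\rho(f,\phi)\geq p$ and $\mu_p(f,\phi)\geq\mu_p(A_m,\phi)$. For part (ii), when $\mu_p(A,\phi)>\mu_p(A_m,\phi)$, I would instead start from the companion inequality, bound $T(r,A_m)$ above by $\exp_{p-1}\{\phi(r)\}^{\mu+\varepsilon}$ and each $T(r,A_j)$ as before, use $T(r,A)\geq\exp_{p-1}\{\phi(r)\}^{\mu_p(A,\phi)-\varepsilon}$ (from the definition of iterated lower $p-\phi$ order, valid for all large $r$), and conclude $T(2r,f)\geq O\bigl(\exp_{p-1}\{\phi(r)\}^{\mu_p(A,\phi)-\varepsilon}\bigr)$ for all large $r$, hence $\mu_p(f,\phi)\geq\mu_p(A,\phi)$.

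The main obstacle, as in Theorem \ref{T3}, is bookkeeping rather than conceptual: one must be careful that in each subcase the exceptional sets are combined correctly. In particular, the lower bound for $A_m$ from Lemma \ref{L4d} holds only on a set $F_2$ of infinite logarithmic measure (not for all large $r$), while the upper bounds hold for all large $r$; intersecting $F_2$ with the complement of $F_1$ still leaves a set of infinite logarithmic measure, so the $\liminf$ argument goes through. The one genuinely delicate point is the third subcase of (i), where neither $A$ nor $A_m$ dominates uniformly and both are only controlled on infinite-measure sets — there one must use the lower bound for $A$ from Lemma \ref{L4d} and the upper bound for $A_m$ from the same lemma on \emph{the same} sequence $\{r_n\}$, which is legitimate because both come from choosing a common subsequence realizing the respective lower type; alternatively one invokes Lemma \ref{L4d} for $A$ on its own set $F_3$ and the plain definition-based upper bound for $T(r,A_m)$ valid for all large $r$, which is the cleaner route and parallels Case 3 of Theorem \ref{T3}.
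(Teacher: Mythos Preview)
Your overall plan mirrors the paper's proof closely --- the same basic inequality, the same companion inequality, the same case split --- but there is one genuine error in your handling of the lower bound on $T(r,A_m)$.

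You write that ``the crucial lower bound on the dominant coefficient comes from Lemma~\ref{L4d}: \dots there is a set $F_2$ with $lm(F_2)=+\infty$ on which $\log_{p-1}T(r,A_m)>(t-\varepsilon)\{\phi(r)\}^{\mu}$.'' This is backwards. Lemma~\ref{L4d} gives an \emph{upper} bound $\log_{p-1}T(r,f)<(t_p(f,\phi)+\varepsilon)\{\phi(r)\}^{\mu_p(f,\phi)}$ on a set of infinite logarithmic measure. The lower bound you need comes directly from the \emph{definition} of $t_p(A_m,\phi)$ as a $\liminf$, and therefore holds for \emph{all} sufficiently large $r$, not merely on some $F_2$. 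This is exactly what the paper does in (\ref{2a.2}).

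The distinction is not cosmetic. Your follow-up claim that ``$\liminf$ over such a set is what defines $\mu_p(f,\phi)$'' is false: a lower bound on $T(2r,f)$ that holds only on a set of infinite logarithmic measure gives information about $\rho_p(f,\phi)$, not about $\mu_p(f,\phi)$. What makes Cases~1 and~2 of part~(i) work in the paper is precisely that the lower bound on $A_m$ holds for all large $r$, all the upper bounds on the $A_j$, on $A$, and on $N(r,A_m)$ also hold for all large $r$, and therefore the final inequality for $T(2r,f)$ holds for all large $r\notin F_1$ with $lm(F_1)<\infty$; monotonicity of $T$ then yields the $\liminf$ conclusion.

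For Case~3 of (i) and for part~(ii) you are on firmer ground: there the paper does invoke Lemma~\ref{L4d} (resp.\ Lemma~\ref{L4c}) to obtain an \emph{upper} bound on $T(r,A_m)$ on a set $F_2$ (resp.\ $F_3$) of infinite logarithmic measure, combined with the definition-based lower bound on $T(r,A)$ valid for all large $r$, exactly as you describe. So once you swap the roles --- definition for lower bounds on $A_m$ and $A$ (valid everywhere), Lemmas~\ref{L4c}/\ref{L4d} for upper bounds on $A_m$ (valid on an infinite-measure set) --- your plan coincides with the paper's proof.
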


\begin{proof}
Let $\varepsilon >0$\ be any arbitrary constant. As in the previous theorem,
from (\ref{2.2})$,$ we get, 
\begin{equation}
T(r,A_{m})\leq \sum_{j=0,j\neq m}^{k}T(r,A_{j})+3\left\{ T(2r,f)\right\}
^{2}+T(r,A)+N(r,A_{m})+O(1)  \label{2a.1}
\end{equation}%
for sufficiently large $r$ and $r\notin F_{1}.$

Now denote,$~\mu =\mu _{p}\left( A_{m},\phi \right) ,~\rho _{1}=\max \{\rho
_{p}\left( A_{j},\phi \right) :j=0,1,\ldots ,k,~\rho _{p}\left( A_{j},\phi
\right) <\mu \},~t=t_{p}\left( A_{m},\phi \right) ,~\tau _{1}=\max \{\tau
_{p}\left( A_{j},\phi \right) :j=0,1,\ldots ,k,~j\neq m,~\rho _{p}\left(
A_{j},\phi \right) =\mu \}$ and $\lambda =\lambda _{p}\left( \frac{1}{A_{m}}%
,\phi \right) .$

Then by the definition of $t_{p}\left( A_{m},\phi \right) ,$ for that $%
\varepsilon >0~$we have%
\begin{equation}
T(r,A_{m})\geq \exp _{p-1}\left[ (t-\varepsilon )\left\{ \phi (r)\right\}
^{\mu }\right]  \label{2a.2}
\end{equation}%
for sufficiently large $r.$

Again if $\rho _{p}\left( A_{j},\phi \right) <\mu ,$ then for that $%
\varepsilon ~$we have%
\begin{equation}
T(r,A_{j})\leq \exp _{p-1}\left[ \left\{ \phi (r)\right\} ^{\mu -\varepsilon
}\right]  \label{2a.3}
\end{equation}%
for sufficiently large $r.$

And if$~\rho _{p}\left( A_{j},\phi \right) =\rho ,~$ $j\neq m,~$then for
that $\varepsilon ~$we have%
\begin{equation}
T(r,A_{j})\leq \exp _{p-1}\left[ (\tau _{1}+\varepsilon )\left\{ \phi
(r)\right\} ^{\mu }\right]  \label{2a.4}
\end{equation}%
for sufficiently large $r.$

Also by the definition of $\lambda _{p}\left( \frac{1}{A_{m}},\phi \right) ,$
we have for the above $\varepsilon $ and for sufficiently large $r$%
\begin{equation}
N(r,A_{m})\leq \exp _{p-1}\left\{ \phi (r)\right\} ^{\lambda +\varepsilon }.
\label{2a.5}
\end{equation}

(i) Case 1: First we take, $\rho _{p}\left( A,\phi \right) <\mu ,$ then for
that $\varepsilon ~$we have%
\begin{equation}
T(r,A)\leq \exp _{p-1}\left[ \left\{ \phi (r)\right\} ^{\rho _{p}\left(
A,\phi \right) +\varepsilon }\right]  \label{2a.6}
\end{equation}%
for sufficiently large $r.$

Now, using the relations (\ref{2a.2})-(\ref{2a.6}) and choosing $\varepsilon 
$ such that $0<\varepsilon <\frac{1}{2}\min \left\{ \mu -\rho _{1},t-\tau
_{1},\mu -\lambda ,\mu -\rho _{p}\left( A,\phi \right) \right\} ,$ we have
from (\ref{2a.1})%
\begin{gather*}
\exp _{p-1}\left[ (\tau -\varepsilon )\left\{ \phi (r)\right\} ^{\mu }\right]
\leq O\left( \exp _{p-1}\left[ \left\{ \phi (r)\right\} ^{\mu -\varepsilon }%
\right] \right) +O\left( \exp _{p-1}\left[ (\tau _{1}+\varepsilon )\left\{
\phi (r)\right\} ^{\mu }\right] \right) \\
+3\left\{ T(2r,f)\right\} ^{2}+\exp _{p-1}\left[ \left\{ \phi (r)\right\}
^{\rho _{p}\left( A,\phi \right) +\varepsilon }\right] +\exp _{p-1}\left[
\left\{ \phi (r)\right\} ^{\lambda +\varepsilon }\right] +O(1) \\
\Rightarrow \left\{ T(2r,f)\right\} ^{2}\geq O\left( \exp _{p-1}\left[
\left\{ \phi (r)\right\} ^{\mu }\right] \right)
\end{gather*}%
for sufficiently large $r$ and $r\notin F_{1}.$

Which implies, $i_{\rho }(f,\phi )\geq p~$and $\mu _{p}\left( f,\phi \right)
\geq \mu =\mu _{p}\left( A_{m},\phi \right) .$

Case 2: Next we suppose that $\rho _{p}\left( A,\phi \right) =\mu $ and $%
\tau _{p}\left( A,\phi \right) <t,~$then for the above $\varepsilon ~$we have%
\begin{equation}
T(r,A)\leq \exp _{p-1}\left[ \left\{ \tau _{p}\left( A,\phi \right)
+\varepsilon \right\} \left\{ \phi (r)\right\} ^{\mu }\right]  \label{2a.7}
\end{equation}%
for sufficiently large $r.$

Now, using relations (\ref{2a.2})-(\ref{2a.5}) and (\ref{2a.7}) and choosing 
$\varepsilon $ such that $0<\varepsilon <\frac{1}{2}\min \left\{ \mu -\rho
_{1},t-\tau _{1},\mu -\lambda ,t-\tau _{p}\left( A,\phi \right) \right\} ,$
we have from (\ref{2a.1})%
\begin{gather*}
\exp _{p-1}\left[ (\tau -\varepsilon )\left\{ \phi (r)\right\} ^{\mu }\right]
\leq O\left( \exp _{p-1}\left[ \left\{ \phi (r)\right\} ^{\mu -\varepsilon }%
\right] \right) +O\left( \exp _{p-1}\left[ (\tau _{1}+\varepsilon )\left\{
\phi (r)\right\} ^{\mu }\right] \right) \\
+3\left\{ T(2r,f)\right\} ^{2}+\exp _{p-1}\left[ \left\{ \tau _{p}\left(
A,\phi \right) +\varepsilon \right\} \left\{ \phi (r)\right\} ^{\mu }\right]
+\exp _{p-1}\left[ \left\{ \phi (r)\right\} ^{\lambda +\varepsilon }\right]
+O(1) \\
\Rightarrow \left\{ T(2r,f)\right\} ^{2}\geq O\left( \exp _{p-1}\left[ (\tau
-\varepsilon )\left\{ \phi (r)\right\} ^{\mu }\right] \right)
\end{gather*}%
for sufficiently large $r$ and $r\notin F_{1}.$

Which implies, $i_{\rho }(f,\phi )\geq p~$and $\mu _{p}\left( f,\phi \right)
\geq \mu =\mu _{p}\left( A_{m},\phi \right) .$

Case 3: Next we take, $\mu _{p}\left( A,\phi \right) =\mu $ and $t_{p}\left(
A,\phi \right) >t,$ then for the above $\varepsilon ~$we have%
\begin{equation}
T(r,A)>\exp _{p-1}\left[ \left\{ t_{p}\left( A,\phi \right) -\varepsilon
\right\} \left\{ \phi (r)\right\} ^{\mu }\right]  \label{2a.8}
\end{equation}%
for sufficiently large $r$.

Again by the definition of $t_{p}\left( A_{m},\phi \right) ,$ and using
Lemma \ref{L4d}, for the above $\varepsilon ~$we have%
\begin{equation}
T(r,A_{m})\leq \exp _{p-1}\left[ (t+\varepsilon )\left\{ \phi (r)\right\}
^{\mu }\right] ,  \label{2a.9}
\end{equation}%
for sufficiently large $r$ with$~r\in F_{2},$ where $lm(F_{2})=+\infty $.

Again as in previous Theorem, from (\ref{2.11}) we get 
\begin{equation}
\Rightarrow T(r,A)\leq \sum_{j=0,j\neq
m}^{k}T(r,A_{j})+T(r,A_{m})+2(k+1)T(2r,f)+O(1)  \label{2a.10}
\end{equation}%
for sufficiently large $r$

Now, using relations (\ref{2a.3}), (\ref{2a.4}), (\ref{2a.8}), (\ref{2a.9})
and choosing $\varepsilon $ such that $0<\varepsilon <\frac{1}{2}\min
\left\{ \mu -\rho _{1},t-\tau _{1},t_{p}\left( A,\phi \right) -t\right\} ,$
we have from (\ref{2a.10}) 
\begin{gather*}
\exp _{p-1}\left[ \left\{ t_{p}\left( A,\phi \right) -\varepsilon \right\}
\left\{ \phi (r)\right\} ^{\mu }\right] \leq O\left( \exp _{p-1}\left[
\left\{ \phi (r)\right\} ^{\mu -\varepsilon }\right] \right) \\
+O\left( \exp _{p-1}\left[ (\tau _{1}+\varepsilon )\left\{ \phi (r)\right\}
^{\mu }\right] \right) +O\left( \exp _{p-1}\left[ (t+\varepsilon )\left\{
\phi (r)\right\} ^{\mu }\right] \right) +(2k+2)T(2r,f)+O(1) \\
\Rightarrow T(2r,f)\geq O\left( \exp _{p-1}\left[ \left\{ t_{p}\left( A,\phi
\right) -\varepsilon \right\} \left\{ \phi (r)\right\} ^{\mu }\right] \right)
\end{gather*}%
for sufficiently large $r$ and $r\in F_{2}.$

It follows that, $i_{\rho }(f,\phi )\geq p~$and $\mu _{p}\left( f,\phi
\right) \geq \mu =\mu _{p}\left( A_{m},\phi \right) .$

(ii) For the second part of the theorem, we take $\mu _{p}\left( A,\phi
\right) >\mu $. Then for the above $\varepsilon ~$we have%
\begin{equation}
T(r,A)\geq \exp _{p-1}\left[ \left\{ \phi (r)\right\} ^{\mu _{p}\left(
A,\phi \right) -\varepsilon }\right]  \label{2a.11}
\end{equation}

Again by using Lemma \ref{L4c}, for the above $\varepsilon ~$we have%
\begin{equation}
T(r,A_{m})\leq \exp _{p-1}\left[ \left\{ \phi (r)\right\} ^{\mu +\varepsilon
}\right] ,  \label{2a.12}
\end{equation}%
for sufficiently large $r$ with$~r\in F_{3},$ where $lm(F_{3})=+\infty $.

Now, using relations (\ref{2a.3}), (\ref{2a.4}), (\ref{2a.11}) and (\ref%
{2a.12}) and choosing $\varepsilon $ such that $0<\varepsilon <\frac{1}{2}%
\min \left\{ \mu -\rho _{1},t-\tau _{1},\mu _{p}\left( A,\phi \right) -\mu
\right\} ,$ we have from (\ref{2a.10}) 
\begin{gather*}
\exp _{p-1}\left[ \left\{ \phi (r)\right\} ^{\mu _{p}\left( A,\phi \right)
-\varepsilon }\right] \leq O\left( \exp _{p-1}\left[ \left\{ \phi
(r)\right\} ^{\mu -\varepsilon }\right] \right) +O\left( \exp _{p-1}\left[
(\tau _{1}+\varepsilon )\left\{ \phi (r)\right\} ^{\mu }\right] \right) \\
+\exp _{p-1}\left[ \left\{ \phi (r)\right\} ^{\mu +\varepsilon }\right]
+(2k+2)T(2r,f)+O(1) \\
\Rightarrow T(2r,f)\geq O\left( \exp _{p-1}\left[ \left\{ \phi (r)\right\}
^{\mu _{p}\left( A,\phi \right) -\varepsilon }\right] \right)
\end{gather*}%
for sufficiently large $r$ and $r\in F_{3}.$

It follows that, $i_{\rho }(f,\phi )\geq p~$and $\mu _{p}\left( f,\phi
\right) \geq \mu _{p}\left( A,\phi \right) .$

This completes the proof.
\end{proof}

Next we state two corollaries considering the homogeneous linear difference
equation (\ref{1h}) which directly follows from Theorem \ref{T3} and Theorem %
\ref{T4}.

\begin{corollary}
Let $A_{j}(z)(j=0,1,\ldots ,k)$ be meromorphic coefficients of (\ref{1h})
and $p=\max \{i_{\rho }(A_{j},\phi ):j=0,1,\ldots ,n-1\}$. If there exits an 
$A_{m}(z)(0\leq m\leq k)$ such that 
\begin{eqnarray*}
\lambda _{p}\left( \frac{1}{A_{m}},\phi \right) &<&\rho _{p}\left(
A_{m},\phi \right) <\infty , \\
\max \{\rho _{p}\left( A_{j},\phi \right) &:&j=0,1,\ldots ,k,~j\neq m\}\leq
\rho _{p}\left( A_{m},\phi \right) \\
\text{and }\max \{\tau _{p}\left( A_{j},\phi \right) &:&\rho _{p}\left(
A_{j},\phi \right) =\rho _{p}\left( A_{m},\phi \right) ,~j=0,1,\ldots
,k,~j\neq m\}<\tau _{p}\left( A_{m},\phi \right) <\infty ,
\end{eqnarray*}%
then every solution $f(z)~(f\neq 0)$ of (\ref{1h}) satisfies $i_{\rho
}(f,\phi )\geq p~$and $\rho _{p}\left( f,\phi \right) \geq \rho _{p}\left(
A_{m},\phi \right) .$
\end{corollary}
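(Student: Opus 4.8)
The plan is to obtain this as the homogeneous specialization of Theorem \ref{T3}. Equation (\ref{1h}) is exactly equation (\ref{1nh}) with $A(z)\equiv 0$; moreover, since $\lambda_p(1/A_m,\phi)\ge 0$ always, the hypothesis $\lambda_p(1/A_m,\phi)<\rho_p(A_m,\phi)$ forces $\rho_p(A_m,\phi)>0$, so in the terminology of Theorem \ref{T3} we are automatically in part (i), Case 1 (the ``$\rho_p(A,\phi)<\rho_p(A_m,\phi)$'' case). Since the standing convention of the paper is $A\not\equiv 0$, rather than quoting Theorem \ref{T3} verbatim I would simply rerun that case with the term $A(z)/f(z+c_m)$ deleted throughout.

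In detail, let $f\ (f\neq 0)$ be a meromorphic solution of (\ref{1h}) and set $c_0=0$. First I would note that $f$ cannot be a nonzero constant: if it were, (\ref{1h}) would give $\sum_{j=0}^{k}A_j(z)=0$, hence $T(r,A_m)\le\sum_{j\neq m}T(r,A_j)+O(1)$, and comparing iterated $p-\phi$ orders and then, among the coefficients of maximal order, iterated $p-\phi$ types, this contradicts the two standing inequalities $\max\{\rho_p(A_j,\phi):j\neq m\}\le\rho_p(A_m,\phi)$ and $\max\{\tau_p(A_j,\phi):\rho_p(A_j,\phi)=\rho_p(A_m,\phi),\,j\neq m\}<\tau_p(A_m,\phi)$. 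Hence $f$ is a nonconstant meromorphic function and Lemmas \ref{L2} and \ref{L3} apply to it. Dividing (\ref{1h}) by $f(z+c_m)$ gives $-A_m(z)=\sum_{j=0,j\neq m}^{k}A_j(z)\,f(z+c_j)/f(z+c_m)$, which is precisely equation (\ref{1.1}); estimating each $m\!\left(r,f(z+c_j)/f(z+c_m)\right)$ by Lemma \ref{L3} and absorbing the error term, exactly as in the proof of Theorem \ref{T1}, we arrive at inequality (\ref{1.2}), valid for all $r\notin F_1$ with $lm(F_1)<+\infty$.

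From here the estimates are those of Theorem \ref{T3}, part (i), Case 1, verbatim. Writing $\rho=\rho_p(A_m,\phi)$, $\tau=\tau_p(A_m,\phi)$, $\lambda=\lambda_p(1/A_m,\phi)$, $\rho_1=\max\{\rho_p(A_j,\phi):j=0,1,\ldots,k,\ \rho_p(A_j,\phi)<\rho\}$ and $\tau_1=\max\{\tau_p(A_j,\phi):j=0,1,\ldots,k,\ j\neq m,\ \rho_p(A_j,\phi)=\rho\}$, one uses Lemma \ref{L4b} to produce a set $F_2$ with $lm(F_2)=+\infty$ on which $T(r,A_m)>\exp_{p-1}[(\tau-\varepsilon)\{\phi(r)\}^{\rho}]$; the elementary bounds $T(r,A_j)\le\exp_{p-1}[\{\phi(r)\}^{\rho_1+\varepsilon}]$ when $\rho_p(A_j,\phi)<\rho$ and $T(r,A_j)\le\exp_{p-1}[(\tau_1+\varepsilon)\{\phi(r)\}^{\rho}]$ when $\rho_p(A_j,\phi)=\rho$, $j\neq m$; and $N(r,A_m)\le\exp_{p-1}\{\phi(r)\}^{\lambda+\varepsilon}$ from the definition of $\lambda_p(1/A_m,\phi)$. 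Substituting all of these into (\ref{1.2}) with $\varepsilon$ chosen so that $0<\varepsilon<\tfrac12\min\{\rho-\rho_1,\ \tau-\tau_1,\ \rho-\lambda\}$ makes the $\{T(2r,f)\}^{2}$ term dominate, yielding $\{T(2r,f)\}^{2}\ge O\!\left(\exp_{p-1}[\{\phi(r)\}^{\rho+\varepsilon}]\right)$ for all large $r\in F_2\setminus F_1$; hence $i_\rho(f,\phi)\ge p$ and $\rho_p(f,\phi)\ge\rho=\rho_p(A_m,\phi)$.

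I do not expect any genuine obstacle here, since the corollary is simply Theorem \ref{T3} read with $A\equiv 0$. The one point deserving a sentence of its own is the exclusion of a nonzero constant solution, which is what licenses the use of the difference lemmas (Lemmas \ref{L2} and \ref{L3}) and is itself an immediate consequence of the type inequality in the hypothesis.
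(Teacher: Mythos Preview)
Your proposal is correct and follows essentially the same approach as the paper: the paper's proof is the single line ``Putting $A=0$ in Theorem~\ref{T3} we get the required result,'' and you carry out exactly that specialization, rerunning part (i), Case~1 of Theorem~\ref{T3} with the $A(z)/f(z+c_m)$ term removed. Your additional remark excluding nonzero constant solutions (so that Lemmas~\ref{L2} and~\ref{L3} apply) is a point the paper leaves implicit throughout, so in that respect your write-up is slightly more careful than the original.
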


\begin{proof}
Putting $A=0$ in the Theorem \ref{T3} we get the required result.
\end{proof}

\begin{corollary}
Let $A_{j}(z)(j=0,1,\ldots ,k)$ and $A(z)$ be meromorphic functions and $%
p=\max \{i_{\rho }(A_{j},\phi ):j=0,1,\ldots ,n-1\}$. If there exits an $%
A_{m}(z)(0\leq m\leq k)$ such that 
\begin{eqnarray*}
\lambda _{p}\left( \frac{1}{A_{m}},\phi \right) &<&\mu _{p}\left( A_{m},\phi
\right) <\infty , \\
\max \{\rho _{p}\left( A_{j},\phi \right) &:&j=0,1,\ldots ,k,~j\neq m\}\leq
\mu _{p}\left( A_{m},\phi \right) \\
\text{and }\max \{\tau _{p}\left( A_{j},\phi \right) &:&\rho _{p}\left(
A_{j},\phi \right) =\mu _{p}\left( A_{m},\phi \right) ,~j=0,1,\ldots
,k,~j\neq m\}<t_{p}\left( A_{m},\phi \right) <\infty ,
\end{eqnarray*}%
then every solution $f(z)~(f\neq 0)$ of (\ref{1h}) satisfies $i_{\rho
}(f,\phi )\geq p~$and $\mu _{p}\left( f,\phi \right) \geq \mu _{p}\left(
A_{m},\phi \right) .$
\end{corollary}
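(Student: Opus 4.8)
The plan is to obtain this corollary as an immediate specialization of Theorem \ref{T4}, exactly as the preceding corollary was deduced from Theorem \ref{T3} by putting $A\equiv 0$. First I would note that when $A(z)\equiv 0$ the nonhomogeneous equation (\ref{1nh}) collapses to the homogeneous equation (\ref{1h}); hence any nontrivial solution $f$ of (\ref{1h}) is a nontrivial solution of (\ref{1nh}) with this choice of $A$. The hypotheses imposed on $A_{0},\ldots ,A_{k}$ and on the distinguished coefficient $A_{m}$ in the present statement are word for word the hypotheses of Theorem \ref{T4}, so no verification is needed on that side.

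Next I would check the remaining hypothesis of Theorem \ref{T4}, the one involving the right-hand side $A$. Since $A\equiv 0$, its Nevanlinna characteristic $T(r,A)$ is bounded (in fact identically zero), so $\rho _{p}\left( A,\phi \right) =0$. On the other hand $A_{m}$ has nonzero finite iterated lower $p-\phi$ order, so $\mu _{p}\left( A_{m},\phi \right) >0$, and therefore $\rho _{p}\left( A,\phi \right) =0<\mu _{p}\left( A_{m},\phi \right)$. This places us precisely in the first alternative of part (i) of Theorem \ref{T4}.

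Invoking part (i) of Theorem \ref{T4} then gives at once that every solution $f(z)\ (f\not\equiv 0)$ of (\ref{1h}) satisfies $i_{\rho }(f,\phi )\geq p$ and $\mu _{p}\left( f,\phi \right) \geq \mu _{p}\left( A_{m},\phi \right)$, which is the assertion of the corollary. There is essentially no obstacle; the only point deserving a moment's care is the convention that the iterated $p-\phi$ order of the identically zero function be read as $0$ (equivalently, that its characteristic be $O(1)$), so that $\rho _{p}(A,\phi )$ indeed lies strictly below the positive quantity $\mu _{p}(A_{m},\phi )$ and Case 1 of Theorem \ref{T4}(i) applies without ambiguity.
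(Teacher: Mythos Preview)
Your proposal is correct and follows exactly the paper's own proof, which consists of the single line ``Putting $A=0$ in the Theorem \ref{T4} we get the required result.'' You have simply spelled out the one detail the paper leaves implicit, namely that $A\equiv 0$ forces $\rho_{p}(A,\phi)=0<\mu_{p}(A_{m},\phi)$ (the latter being positive since $0\leq \lambda_{p}(1/A_{m},\phi)<\mu_{p}(A_{m},\phi)$, and also because $t_{p}(A_{m},\phi)$ is only defined for nonzero lower order), so that Case~1 of Theorem~\ref{T4}(i) applies.
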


\begin{proof}
Putting $A=0$ in the Theorem \ref{T4} we get the required result.
\end{proof}

Our Theorem \ref{T5} and Theorem \ref{T6} are improvements of the theorems
due to Bela\"{\i}di-Benkarouba (\cite{bel}, Theorem 1.1 and Theorem 1.2).
Here we modify the hypothesis by giving some conditions on the
Characteristic functions of the coefficients of (\ref{1h}) rather than
taking coefficient having maximal iterated $p-\phi $ order.

\begin{theorem}
\label{T5}Let $A_{0}(z),A_{1}(z),....,A_{k}(z)$ be meromorphic coefficients
of (\ref{1h}) satisfying $\max \{i_{\rho }(A_{j},\phi ):j=0,1,\ldots
,n-1\}=p $ and $\max \left\{ \rho _{p}\left( A_{j},\phi \right) :0\leq j\leq
k\right\} \leq \rho .$ If there exits an $A_{m}(z)(0\leq m\leq k)$ with $%
\lambda _{p}\left( \frac{1}{A_{m}},\phi \right) <\rho _{p}\left( A_{m},\phi
\right) $ and a set $F\subset (1,+\infty )$ with $lm(F)=+\infty $ such that
for some constants $a,b$ $(0\leq b<a)$ and $\delta $ $\left( 0<\delta <\rho
\right) $ sufficiently small with 
\begin{eqnarray}
T(r,A_{m}) &\geq &\exp _{p-1}\left[ a\left\{ \phi \left( r\right) \right\}
^{\rho -\delta }\right] ,  \label{3.1} \\
\text{and }T(r,A_{j}) &\leq &\exp _{p-1}\left[ b\left\{ \phi \left( r\right)
\right\} ^{\rho -\delta }\right] ,\text{ }j=0,1,...,k;\text{ }j\neq l,
\label{3.2}
\end{eqnarray}%
as $r\rightarrow \infty $ for $r\in F,$ then every solution $f(z)~(f\neq 0)$
of (\ref{1h}) satisfies $i_{\rho }(f,\phi )\geq p~$and $\rho _{p}\left(
f,\phi \right) \geq \rho _{p}\left( A_{m},\phi \right) .$
\end{theorem}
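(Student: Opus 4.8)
The strategy is to run the argument of Theorem \ref{T1} almost verbatim, replacing the order comparison used there by the hypotheses (\ref{3.1})-(\ref{3.2}) on $F$ and by the estimate on $N(r,A_m)$ that follows from $\lambda_p(1/A_m,\phi)<\rho_p(A_m,\phi)$. First I would take a nontrivial meromorphic solution $f$ of (\ref{1h}), set $c_0=0$, divide (\ref{1h}) by $f(z+c_m)$ and isolate
\begin{equation*}
-A_m(z)=\sum_{j=0,\,j\neq m}^{k}A_j(z)\,\frac{f(z+c_j)}{f(z+c_m)}.
\end{equation*}
Since the $c_j$ are distinct, Lemma \ref{L3} applied to each quotient $f(z+c_j)/f(z+c_m)$ produces a set $F_1\subset(1,+\infty)$ with $lm(F_1)<+\infty$ such that $m\big(r,f(z+c_j)/f(z+c_m)\big)=o(\{T(r+3C,f)\}^{1+\varepsilon}/r^{\delta})$ for $r\notin F_1$, where $C=\max\{|c_j|\}$. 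Substituting this into $T(r,A_m)=m(r,A_m)+N(r,A_m)\le\sum_{j\neq m}m(r,A_j)+\sum_{j\neq m}m\big(r,f(z+c_j)/f(z+c_m)\big)+N(r,A_m)+O(1)$ gives, exactly as in (\ref{1.2}), the basic inequality
\begin{equation*}
T(r,A_m)\le\sum_{j=0,\,j\neq m}^{k}T(r,A_j)+\{T(2r,f)\}^{2}+N(r,A_m)+O(1),\qquad r\notin F_1 .
\end{equation*}

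Next I would restrict to $r\in F\setminus F_1$; this set still has infinite logarithmic measure because $lm(F)=+\infty$ while $lm(F_1)<+\infty$. On $F$, (\ref{3.1}) bounds the left side below by $\exp_{p-1}[a\{\phi(r)\}^{\rho-\delta}]$ and (\ref{3.2}) bounds $\sum_{j\neq m}T(r,A_j)$ above by $k\exp_{p-1}[b\{\phi(r)\}^{\rho-\delta}]$, while the definition of $\lambda:=\lambda_p(1/A_m,\phi)$ gives $N(r,A_m)\le\exp_{p-1}\{\phi(r)\}^{\lambda+\varepsilon}$ for all large $r$. Since $b<a$ and both $\exp_{p-1}$ and $\phi$ are increasing with $\phi$ unbounded, $k\exp_{p-1}[b\{\phi(r)\}^{\rho-\delta}]=o\big(\exp_{p-1}[a\{\phi(r)\}^{\rho-\delta}]\big)$; and because $\lambda<\rho_p(A_m,\phi)\le\rho$, taking $\delta$ small enough that $\lambda<\rho-\delta$ and then $\varepsilon$ with $\lambda+\varepsilon<\rho-\delta$ makes the $N(r,A_m)$ term negligible as well. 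Hence, for all large $r\in F\setminus F_1$,
\begin{equation*}
\{T(2r,f)\}^{2}\ge\frac{1}{2}\exp_{p-1}\big[a\{\phi(r)\}^{\rho-\delta}\big].
\end{equation*}
Applying $\log_p$ to this inequality and invoking condition (ii) on $\phi$ (so that $\log\phi(2r)/\log\phi(r)\to1$) yields $i_\rho(f,\phi)\ge p$ and $\rho_p(f,\phi)\ge\rho-\delta$; since (\ref{3.1}) forces $\rho-\delta\le\rho_p(A_m,\phi)\le\rho$ and $\delta$ is taken sufficiently small, this gives the asserted estimate $\rho_p(f,\phi)\ge\rho_p(A_m,\phi)$.

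The step I expect to need the most care is the compatibility of the two exceptional sets: the Halburd-Korhonen estimate in Lemma \ref{L3} is only valid off a set $F_1$ of finite logarithmic measure, whereas the hypotheses (\ref{3.1})-(\ref{3.2}) only hold on a set $F$ of infinite logarithmic measure, and one must check that $F\setminus F_1$ is still of infinite logarithmic measure so that the basic inequality and the hypotheses can be used simultaneously. A secondary routine point is the $p$-dependent bookkeeping in the two displays above: the factor $k$ in front of the $\exp_{p-1}[b\{\phi(r)\}^{\rho-\delta}]$ term is absorbed precisely because $b<a$, and for $p=1$ one should carry the exponent $1+\varepsilon$ coming from Lemma \ref{L3} rather than crudely replacing it by $2$, letting $\varepsilon\downarrow 0$ at the end.
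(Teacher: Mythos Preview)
Your proof is correct, but it takes a genuinely different route from the paper's.

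You follow the template of Theorem~\ref{T1}: apply Lemma~\ref{L3} to get the basic inequality containing $\{T(2r,f)\}^{2}$, feed in the hypotheses (\ref{3.1})--(\ref{3.2}) on $F\setminus F_1$, and read off $\rho_p(f,\phi)\ge\rho-\delta$ directly, then let $\delta$ be sufficiently small to reach $\rho_p(f,\phi)\ge\rho_p(A_m,\phi)$. The paper instead argues by contradiction: it first proves, from (\ref{3.1}) and the standing bound $\rho_p(A_j,\phi)\le\rho$, that in fact $\rho_p(A_m,\phi)=\rho$; then it assumes $\rho_p(f,\phi)=\kappa<\rho$ and invokes Lemma~\ref{L5a} (not Lemma~\ref{L3}) to bound each $m\bigl(r,\frac{f(z+c_j)}{f(z+c_m)}\bigr)$ by $O\bigl(\exp_{p-1}\{\phi(r)\}^{\kappa+\varepsilon}\bigr)$, which is then compared against (\ref{3.1})--(\ref{3.2}) and the bound on $N(r,A_m)$ to force a contradiction. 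In particular the paper's argument never produces a $\{T(2r,f)\}^{2}$ term at all; the proximity terms are controlled by the \emph{assumed} finite order $\kappa$ of $f$.

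What each buys: the paper's contradiction route, via Lemma~\ref{L5a}, cleanly sidesteps the $p=1$ bookkeeping you flag at the end (no exponent $1+\varepsilon$ to carry), and the preliminary identification $\rho_p(A_m,\phi)=\rho$ makes the final statement immediate. Your direct route is closer in spirit to Theorem~\ref{T1}, avoids introducing Lemma~\ref{L5a}, and makes the dependence on $\delta$ explicit; the price is exactly the two ``routine'' points you mention (the $p=1$ exponent and the absorption of the factor $k$), both of which you handle. Your observation that $lm(F\setminus F_1)=+\infty$ is the key compatibility step in either approach.
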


\begin{proof}
First we shall show that $\rho _{p}\left( A_{m},\phi \right) =\rho .~$If not
let us assume that $\rho >\rho _{p}\left( A_{m},\phi \right) =\sigma $
(say). Then for given any $\varepsilon >0$ and for sufficiently large $r,$we
have%
\begin{equation*}
T(r,A_{m})\leq \exp _{p-1}\left[ \left\{ \phi \left( r\right) \right\}
^{\sigma +\varepsilon }\right]
\end{equation*}

Again by (\ref{3.1}), there exists $\delta \left( 0<\delta <\rho \right) ,$
such that 
\begin{equation*}
T(r,A_{m})\geq \exp _{p-1}\left[ a\left\{ \phi \left( r\right) \right\}
^{\rho -\delta }\right]
\end{equation*}%
as $r\rightarrow \infty $ for $r\in F.$

Now choosing $\varepsilon <\rho -\sigma -2\delta $ and combining the above
two inequalities, we obtain for $r\in F$%
\begin{equation*}
\exp _{p-1}\left[ a\left\{ \phi \left( r\right) \right\} ^{\rho -\delta }%
\right] \leq T(r,A_{m})\leq \exp _{p-1}\left[ \left\{ \phi \left( r\right)
\right\} ^{\sigma +\varepsilon }\right] ,
\end{equation*}%
which is a contradiction as $r\rightarrow \infty .$

Hence $\rho _{p}\left( A_{m},\phi \right) =\rho .\qquad $

Now let $f$ be a nonzero meromorphic solution of (\ref{1h}) and put $c_{0}=0$%
. If possible let us assume that $\rho >\rho _{p}\left( f,\phi \right)
=\kappa ~$(say). Then by Lemma \ref{L5a}, for given any $\varepsilon >0$ and
for sufficiently large $r,$ we have%
\begin{equation}
m\left( r,\frac{f(z+c_{j})}{f(z+c_{m})}\right) =O\left( \exp _{p-1}\left[
\left\{ \phi \left( r\right) \right\} ^{\kappa +\varepsilon }\right] \right)
,~\text{for }j\neq m.  \label{3.3}
\end{equation}

Also by the definition of $\lambda _{p}\left( \frac{1}{A_{m}},\phi \right)
=\lambda ,$ (say) we have for the above $\varepsilon $ and for sufficiently
large $r$%
\begin{equation}
N(r,A_{m})\leq \exp _{p-1}\left\{ \phi (r)\right\} ^{\lambda +\varepsilon }.
\label{3.4}
\end{equation}

By using similar arguments as in Theorem 1, from (\ref{1.1a}) we get,

\begin{eqnarray}
&&T(r,A_{m})=m(r,A_{m})+N(r,A_{m})  \notag \\
&\leq &\sum_{j=0,j\neq m}^{k}m(r,A_{j})+\sum_{j=0,j\neq m}^{k}m\left( r,%
\frac{f(z+c_{j})}{f(z+c_{m})}\right) +N(r,A_{m})+O(1)  \label{3.5}
\end{eqnarray}%
for$~r\notin F_{1},$ where $lm(F_{1})<+\infty $

Now, using the relations (\ref{3.1})-(\ref{3.4}) and choosing $\varepsilon $
such that $0<\varepsilon <\frac{1}{2}\min \left\{ \rho -\kappa -2\delta
,\rho -\lambda -2\delta \right\} ,$ we have from (\ref{3.5})%
\begin{eqnarray*}
\exp _{p-1}\left[ \left\{ a\phi \left( r\right) \right\} ^{\rho -\delta }%
\right] &\leq &k\exp _{p-1}\left[ b\left\{ \phi \left( r\right) \right\}
^{\rho -\delta }\right] \\
&&+\exp _{p-1}\left[ \left\{ \phi \left( r\right) \right\} ^{\kappa
+\varepsilon }\right] +\exp _{p-1}\left[ \left\{ \phi (r)\right\} ^{\lambda
+\varepsilon }\right] +O(1).
\end{eqnarray*}%
for $r\in F\backslash F_{1}.$

Which implies%
\begin{equation*}
(a-b)\left\{ \phi \left( r\right) \right\} ^{\rho -\delta }\leq \left\{ \phi
\left( r\right) \right\} ^{\kappa +\varepsilon }+\left\{ \phi (r)\right\}
^{\lambda +\varepsilon }+O(1),\text{ for }r\in F\backslash F_{1}.
\end{equation*}

Since $(a-b)>0,$ the above expression implies%
\begin{equation*}
1\leq \frac{\left\{ \phi \left( r\right) \right\} ^{\kappa +\varepsilon
-\rho +\delta }+\left\{ \phi (r)\right\} ^{\lambda +\varepsilon -\rho
+\delta }}{(a-b)}+\frac{O(1)}{(a-b)\left\{ \phi \left( r\right) \right\}
^{\rho -\delta }}\rightarrow 0\text{ as }r\rightarrow +\infty ,
\end{equation*}%
which is a contradiction.

Thus we have, $\rho _{p}\left( f,\phi \right) \geq \rho =\rho _{p}\left(
A_{m},\phi \right) $ as well as $i_{\rho }(f,\phi )\geq p.$
\end{proof}

Our next theorem i.e. Theorem \ref{T6} is the entire version of the previous
theorem.

\begin{theorem}
\label{T6}Let $A_{0}(z),A_{1}(z),....,A_{k}(z)$ be entire coefficients of (%
\ref{1h}) satisfying $\max \{i_{\rho }(A_{j},\phi ):j=0,1,\ldots ,n-1\}=p$
and $\max \left\{ \rho _{p}\left( A_{j},\phi \right) :0\leq j\leq k\right\}
\leq \rho .$If there exits an $A_{m}(z)(0\leq m\leq k)$ and a set $F\subset
(1,+\infty )$ with $lm(F)=+\infty $ such that for some constants $a,b$ $%
(0\leq b<a)$ and $\delta $ $\left( 0<\delta <\rho \right) $ sufficiently
small with 
\begin{eqnarray}
\left\vert A_{m}(z)\right\vert &\geq &\exp _{p}\left[ a\left\{ \phi \left(
r\right) \right\} ^{\rho -\delta }\right]  \label{4.1} \\
\text{and }\left\vert A_{j}(z)\right\vert &\leq &\exp _{p}\left[ b\left\{
\phi \left( r\right) \right\} ^{\rho -\delta }\right] ,\text{ }j=0,1,...,k;%
\text{ }j\neq l,  \label{4.2}
\end{eqnarray}%
as $r\rightarrow \infty $ for $r\in F,$ then every solution $f(z)~(f\neq 0)$
of (\ref{1h}) satisfies $i_{\rho }(f,\phi )\geq p~$and $\rho _{p}\left(
f,\phi \right) \geq \rho _{p}\left( A_{m},\phi \right) .$
\end{theorem}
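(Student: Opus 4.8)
The plan is to run the proof of Theorem \ref{T5} pointwise. Since the coefficients are now entire, the hypotheses (\ref{4.1})--(\ref{4.2}) are pointwise lower and upper bounds on the moduli $|A_j(z)|$, so instead of estimating proximity functions via Lemma \ref{L5a} I would divide (\ref{1h}) by $f(z+c_m)$ and bound the quotients $f(z+c_j)/f(z+c_m)$ pointwise by Lemma \ref{L5c}.

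First I would check that $\rho _p(A_m,\phi )=\rho $. If instead $\rho _p(A_m,\phi )=\sigma <\rho $, then, $A_m$ being entire, the definition of the iterated $p-\phi $ order gives $M_{A_m}(r)\le \exp _p\{\phi (r)\}^{\sigma +\varepsilon }$ for every $\varepsilon >0$ and all large $r$, whereas (\ref{4.1}) forces $M_{A_m}(r)\ge \exp _p\!\left[ a\{\phi (r)\}^{\rho -\delta }\right] $ for $r\in F$. Since $lm(F)=+\infty $, the set $F$ is unbounded, so choosing $\varepsilon $ with $\sigma +\varepsilon <\rho -\delta $ and letting $r\to \infty $ through $F$ yields, by monotonicity of $\exp _p$, the absurdity $a\le \{\phi (r)\}^{\sigma +\varepsilon -\rho +\delta }\to 0$. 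Hence $\rho _p(A_m,\phi )=\rho $.

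Now let $f$ be a nontrivial solution of (\ref{1h}) and put $c_0=0$. If $i_\rho (f,\phi )>p$, or $i_\rho (f,\phi )=p$ with $\rho _p(f,\phi )\ge \rho $, there is nothing to prove, so suppose $\rho _p(f,\phi )=\kappa <\rho $ and seek a contradiction. Dividing (\ref{1h}) by $f(z+c_m)$ gives $-A_m(z)=\sum _{j\neq m}A_j(z)\dfrac{f(z+c_j)}{f(z+c_m)}$, whence, for $|z|=r$,
\begin{equation*}
|A_m(z)|\le k\Big( \max _{j\neq m}|A_j(z)|\Big) \Big( \max _{j\neq m}\left| \frac{f(z+c_j)}{f(z+c_m)}\right| \Big) .
\end{equation*}
By Lemma \ref{L5c} (applied, if $i_\rho (f,\phi )<p$, at the iteration level $i_\rho (f,\phi )$) there is a set $F_1\subset (1,+\infty )$ with $lm(F_1)<+\infty $ such that $\left| f(z+c_j)/f(z+c_m)\right| \le \exp _p\{\phi (r)\}^{\kappa +\varepsilon }$ whenever $r=|z|\notin F_1\cup [0,1]$. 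Feeding in (\ref{4.1}), (\ref{4.2}) and taking logarithms (so that each $\exp _p$ drops to $\exp _{p-1}$) I obtain, for all $z$ with $r=|z|$ in the set $F\setminus (F_1\cup [0,1])$, which still has infinite logarithmic measure and hence is unbounded,
\begin{equation*}
\exp _{p-1}\!\left[ a\{\phi (r)\}^{\rho -\delta }\right] \le \log k+\exp _{p-1}\!\left[ b\{\phi (r)\}^{\rho -\delta }\right] +\exp _{p-1}\{\phi (r)\}^{\kappa +\varepsilon }.
\end{equation*}

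Finally I would extract the contradiction. After shrinking $\varepsilon $ (and, if necessary, $\delta $) so that $\kappa +\varepsilon <\rho -\delta $, put $u=\{\phi (r)\}^{\rho -\delta }$ and $v=\{\phi (r)\}^{\kappa +\varepsilon }$; then $u\to \infty $, $v=o(u)$, $\log k=o(u)$, and $(a-b)u\to \infty $ since $0\le b<a$. For $p=1$ the displayed inequality is $(a-b)u\le \log k+v$, and dividing by $u$ gives $a-b\le o(1)$, impossible. For $p\ge 2$, each of $\log k$, $\exp _{p-1}[bu]$ and $\exp _{p-1}[v]$ is $o\!\left( \exp _{p-1}[au]\right) $ (because $(a-b)u\to \infty $ and $au-v\to \infty $), so the whole right-hand side is $o\!\left( \exp _{p-1}[au]\right) $, again contradicting the inequality. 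Hence no such $f$ exists, i.e. $\rho _p(f,\phi )\ge \rho =\rho _p(A_m,\phi )$; and since $\rho >0$ this rules out $i_\rho (f,\phi )<p$, so $i_\rho (f,\phi )\ge p$. I expect the only delicate point to be this last comparison of iterated exponentials --- verifying that the strict gap $a>b$ together with $\kappa +\varepsilon <\rho -\delta $ really forces $\exp _{p-1}[au]$ to outgrow $\log k+\exp _{p-1}[bu]+\exp _{p-1}[v]$; the reduction from the pointwise modulus bounds to the displayed inequality and the appeal to Lemma \ref{L5c} are routine.
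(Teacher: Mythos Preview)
Your proposal is correct and follows essentially the same route as the paper: show $\rho_p(A_m,\phi)=\rho$, assume $\rho_p(f,\phi)=\kappa<\rho$, divide (\ref{1h}) by $f(z+c_m)$, invoke Lemma~\ref{L5c} for the pointwise quotient bound, and combine with (\ref{4.1})--(\ref{4.2}) on $F\setminus(F_1\cup[0,1])$ to reach a contradiction. The only cosmetic difference is that the paper keeps the inequality in multiplicative form, writing $1\le k\,\dfrac{\exp_p[b\{\phi(r)\}^{\rho-\delta}]}{\exp_p[a\{\phi(r)\}^{\rho-\delta}]}\exp_p[\{\phi(r)\}^{\rho-2\delta}]\to 0$, whereas you take a logarithm and argue additively with a $p=1$/$p\ge 2$ split; both yield the same contradiction.
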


\begin{proof}
First we shall show that $\rho _{p}\left( A_{m},\phi \right) =\rho .~$If not
let us assume that $\rho >\rho _{p}\left( A_{m},\phi \right) =\sigma $
(say). Then for given any $\varepsilon >0$ and for sufficiently large $r,$we
have%
\begin{equation*}
\left\vert A_{m}(z)\right\vert \leq \exp _{p}\left[ \left\{ \phi \left(
r\right) \right\} ^{\sigma +\varepsilon }\right]
\end{equation*}

Again by (\ref{4.1}), there exists $\delta \left( 0<\delta <\rho \right) ,$
such that 
\begin{equation*}
\left\vert A_{m}(z)\right\vert \geq \exp _{p}\left[ a\left\{ \phi \left(
r\right) \right\} ^{\rho -\delta }\right]
\end{equation*}%
as $r\rightarrow \infty $ for $r\in F.$

Now choosing $\varepsilon <\rho -\sigma -2\delta $ and combining the above
two inequalities, we obtain for $r\in F$%
\begin{equation*}
\exp _{p}\left[ a\left\{ \phi \left( r\right) \right\} ^{\rho -\delta }%
\right] \leq \left\vert A_{m}(z)\right\vert \leq \exp _{p}\left[ \left\{
\phi \left( r\right) \right\} ^{\sigma +\varepsilon }\right] ,
\end{equation*}%
which is a contradiction as $r\rightarrow \infty .$

Hence $\rho _{p}\left( A_{m},\phi \right) =\rho .$

Now let $f$ be a nonzero meromorphic solution of (\ref{1h}) and put $c_{0}=0 
$. If possible let us assume that $\rho >\rho _{p}\left( f,\phi \right)
=\kappa ~$(say).

Now divide (\ref{1h}) by $f(z+c_{m})$ we get%
\begin{gather}
-A_{m}(z)=\sum_{j=0,j\neq m}^{k}A_{j}(z)\frac{f(z+c_{j})}{f(z+c_{m})}. 
\notag \\
\Rightarrow -1=\sum\limits_{j=0,j\neq l}^{k}\frac{A_{j}(z)f(z+c_{j})}{%
A_{m}(z)f(z+c_{m})}  \notag \\
\Rightarrow 1\leq \sum\limits_{j=0,j\neq l}^{k}\left\vert \frac{%
A_{j}(z)f(z+c_{j})}{A_{m}(z)f(z+c_{m})}\right\vert  \label{4.3}
\end{gather}

Choose $\varepsilon >0$ such that $0<\varepsilon <\frac{1}{2}\min \left\{
\rho -\kappa -2\delta ,\rho -\lambda -2\delta \right\} .$ Then by Lemma \ref%
{L5c}, for that $\varepsilon $ there exists a subset $F_{1}\subset \left(
1,+\infty \right) $ with $lm(F_{1})<+\infty $ such that for all $r\notin
F_{1}\cup \left[ 0,1\right] ,$ we have%
\begin{equation}
\left\vert \frac{f(z+c_{j})}{f(z+c_{m})}\right\vert \leq \exp _{p}\left[
\left\{ \phi \left( r\right) \right\} ^{\kappa +\varepsilon }\right] <\exp
_{p}\left[ \left\{ \phi \left( r\right) \right\} ^{\rho -2\delta }\right] ,~%
\text{for }j\neq m.  \label{4.4}
\end{equation}

Thus for $r\in F\backslash \left( F_{1}\cup \left[ 0,1\right] \right) ,$
substituting (\ref{4.1}), (\ref{4.2}) and (\ref{4.4}) in (\ref{4.3}), we
obtain%
\begin{equation*}
1\leq k\frac{\exp _{p}\left[ b\left\{ \phi \left( r\right) \right\} ^{\rho
-\delta }\right] }{\exp _{p}\left[ a\left\{ \phi \left( r\right) \right\}
^{\rho -\delta }\right] }\exp _{p}\left[ \left\{ \phi \left( r\right)
\right\} ^{\rho -2\delta }\right] \rightarrow 0\text{ as }r\rightarrow
\infty .
\end{equation*}

Which is a contradiction.

Hence we get $\rho _{p}\left( f,\phi \right) \geq \rho =\rho _{p}\left(
A_{m},\phi \right) $ as well as $i_{\rho }(f,\phi )\geq p.$
\end{proof}

\end{document}